\tikzstyle{vertex}=[circle,draw,inner sep=0pt, minimum size=6pt]
\def\namedlabel#1#2{\begingroup
    #2%
    \def\@currentlabel{#2}%
    \phantomsection\label{#1}\endgroup
}
\newtheorem{thm}{Theorem}[section]
\newtheorem{cor}[thm]{Corollary}
\newtheorem{lem}[thm]{Lemma}
\newtheorem{prop}[thm]{Proposition}
\theoremstyle{definition}\newtheorem{rem}[thm]{Remark}
\newtheorem{ex}[thm]{Example}
\newcommand{\Aut}{{\rm Aut}}
\newcommand{\GAP}{\textbf{{\rm GAP}}}
\newcommand{\GL}{{\rm GL}}
\newcommand{\SO}{{\rm SO}}
\newcommand{\AGL}{{\rm AGL}}
\newcommand{\ASO}{{\rm ASO}}
\newcommand{\Cay}{{\rm Cay}}
\newcommand{\calP}{\mathcal{P}}
\newcommand{\calB}{\mathcal{B}}
\newcommand{\calA}{\mathcal{A}}
\newcommand{\calX}{\mathcal{X}}
\renewcommand\le{\leqslant}
\renewcommand\ge{\geqslant}
\newcommand{\F}{\mathbb{F}}
\def\Box{\vcenter{\vbox{\hrule\hbox{\vrule
     \vbox to 8.8pt{\hbox to 10pt{}\vfill}\vrule}\hrule}}}
\newcommand{\Z}{{\mathbb Z}}
\newcommand{\ep}{\varepsilon}
\newcommand{\VO}{{\rm VO}}
\newcommand{\VNO}{{\rm VNO}}
\begin{document}
 
\title{Nonabelian partial difference sets constructed using abelian techniques}
\author{
James A. Davis\footnote{University of Richmond, VA 23173, {\tt jdavis@richmond.edu}},
John Polhill\footnote{Commonwealth University, Bloomsburg, PA, {\tt jpolhill@commonwealthu.edu}},
Ken Smith\footnote{Huntsville, TX 77340, {\tt kenwsmith54@gmail.com}},
Eric Swartz\footnote{William \& Mary, Williamsburg, VA 23187, {\tt easwartz@wm.edu}}
}

\date{}

\maketitle

\begin{abstract}
A $(v,k,\lambda, \mu)$-partial difference set (PDS) is a subset $D$ of a group $G$ such that $|G| = v$, $|D| = k$, and every nonidentity element $x$ of $G$ can be written in either $\lambda$ or $\mu$ different ways as a product $gh^{-1}$, depending on whether or not $x$ is in $D$.  Assuming the identity is not in $D$ and $D$ is inverse-closed, the corresponding Cayley graph $\Cay(G,D)$ will be strongly regular.  Partial difference sets have been the subject of significant study, especially in abelian groups, but relatively little is known about PDSs in nonabelian groups.  While many techniques useful for abelian groups fail to translate to a nonabelian setting, the purpose of this paper is to show that examples and constructions using abelian groups can be modified to generate several examples in nonabelian groups.  In particular, in this paper we use such techniques to construct the first known examples of PDSs in nonabelian groups of order $q^{2m}$, where $q$ is a power of an odd prime $p$ and $m \ge 2$.  The groups constructed can have exponent as small as $p$ or as large as $p^r$ in a group of order $p^{2r}$.  Furthermore, we construct what we believe are the first known Paley-type PDSs in nonabelian groups and what we believe are the first examples of Paley-Hadamard difference sets in nonabelian groups, and, using analogues of product theorems for abelian groups, we obtain several examples of each.  We conclude the paper with several possible future research directions.  
\end{abstract}

\section{Introduction}

This work focuses on the algebraic structure known as a partial difference set (PDS).  A $(v, k, \lambda, \mu)$-PDS is a subset $D$ of a group $G$ such that $|G| = v$; $|D| = k$; every nonidentity element of $D$ can be written as $d_1d_2^{-1}$, where $d_1, d_2 \in D$, in $\lambda$ different ways; and every nonidentity element of $G - D$ can be written as $d_1d_2^{-1}$, where $d_1, d_2 \in D$, in $\mu$ different ways. These sets have received much attention due to their correspondences with strongly regular graphs, codes, bent functions, and association schemes.

Over the past few decades, numerous constructions of PDSs have been given in many {\it abelian} groups (for example, \cite{Davis_1994, Feng_Momihara_Xiang_2015, Leung_Ma_1990, Malandro_Smith_2020, Momihara_Xiang_2018, Polhill_2008, Polhill_2019}). The methods nearly always include the use of characters, no doubt because they provide a relatively simple proof. Recent work has shed light on the fact that PDSs can be constructed in nonabelian groups as well, see for instance \cite{Brady_2022, DeWinter_etal_2016, Feng_He_Chen_2020, Polhill_etal_2023, Swartz_2015, Swartz_Tauscheck_2021}. We believe that nonabelian groups will provide many interesting examples of PDSs, even though relatively few examples are known in this setting (see \cite[Sections 4--5]{Polhill_etal_2023} for a recent survey). 

In a previous paper \cite{Polhill_etal_2023}, the authors investigated PDSs in nonabelian groups for which there are no abelian PDSs with those parameters. In this situation, both the parameters and the PDS itself are called \textit{genuinely nonabelian}.  On the other end of the spectrum, there are examples of PDSs in nonabelian groups that are not genuinely nonabelian, such as in \cite{Feng_He_Chen_2020}; that is, given a $(v, k, \lambda, \mu)$-PDS in an abelian group, there also exists a $(v, k, \lambda, \mu)$-PDS in a nonabelian group.  While one of the main themes of \cite{Polhill_etal_2023} is that many tools from the abelian setting simply do not apply to nonabelian groups, the purpose of this paper is to show that several constructions and existence results in abelian groups have direct analogues in nonabelian groups.  In fact, in several instances, both the abelian group and the nonabelian group act on the same underlying combinatorial object (in this case, the same strongly regular graph).

The main results of this paper can be summarized as follows. 
\begin{itemize}
 \item[(1)] Let $q$ be a power of an odd prime $p$ and $m \ge 2$.  Then, there exist nonisomorphic, nonabelian groups of order $q^{2m}$ and exponent $p$ whose nonidentity elements can be partitioned into PDSs  (Theorem \ref{thm:nonabpds}, Remark \ref{rem:G1expp}, Theorem \ref{thm:nonabpds2}, Remark \ref{rem:G2expp}, Theorem \ref{thm:G1notG2}).
 \item[(2)] Let $q$ be a power of an odd prime $p$.  There exists a nonabelian group of order $q^4$ and exponent $p$ that can be partitioned into $q + 3$ PDSs, the union of any which is also a PDS (Theorem \ref{thm:affassocscheme}).  In particular, this group contains a Paley-type PDS (Corollary \ref{cor:affpaley}).
 \item[(3)] Let $t \ge 2$ and $p$ be an odd prime.  The group 
 \[ \widehat{G_t} \colonequals \left\langle x, y : x^{p^t} = y^{p^t} = 1, yxy^{-1} = x^{(p-1)p^{t-1} + 1} \right\rangle \cong \Z_{p^t} \rtimes_{(p-1)p^{t-1} + 1} \Z_{p^t}\]
 can be partitioned into $2p$ PDSs in such a way that the union of any of them is also a PDS.  In particular, there is a Paley-type PDS in $\widehat{G_t}$ (Theorem \ref{mainPaleyconstr}).  
 \item[(4)]  If two groups $G$ and $G'$ of order $v$ possess Paley-type PDSs, then the group $G \times G'$ also contains a Paley-type PDS  (Theorem \ref{generalproduct}). Combined with the results of (2) and (3), this provides infinitely many more examples of Paley-type PDSs in nonabelian groups.  
 \item[(5)] If a group $G$ of order $v$ contains a Paley-type PDS and the group $G'$ of order $v \pm 2$ contains a skew Hadamard difference set (DS), then the product group $G \times G'$ contains a Paley-Hadamard DS in the Stanton-Sprott (Twin prime power) family (Theorem \ref{twinprimes}).  Combined with the results of (2) and (3), this provides more examples of Paley-Hadamard difference sets in nonabelian groups. 
 \item[(6)] In many cases, the group ring calculations needed to prove a product theorem in the abelian case (that is, the existence of PDSs in abelian groups $G$ and $G'$ imply the existence of a PDS in $G \times G'$) do not depend on whether or not the groups are abelian, meaning that they will automatically translate to the nonabelian setting (Lemma \ref{GroupRingRelationsLemma}, Theorem \ref{Productstononabelian}).
\end{itemize}

We believe the PDSs constructed in this paper are the first infinite families of PDSs in nonabelian groups of order $q^d$, where $q$ is an odd prime power and $d > 3$. (Partial difference sets of have been constructed in nonabelian groups of order $q^3$, where $q$ is an odd prime power, in \cite{Swartz_2015} and \cite{Polhill_etal_2023}.)  Moreover, in this paper we construct what we believe are the first known Paley-type PDSs in nonabelian groups and what we believe are the first examples of Paley-Hadamard DSs in nonabelian groups. 

This paper is oraganized as follows.  Section \ref{sect:prelim} contains preliminary material related to PDSs, association schemes, and quadratic forms. In Section \ref{sect:aff}, we will use geometric techniques to construct families of PDSs in certain nonabelian groups or order $q^{2m}$ and exponent $p$, where $p$ is an odd prime and $q$ is a power of $p$. In Section \ref{sec:SemidirectLargeCenter}, we will use group ring equations from the abelian world to obtain PDSs in groups of the form $G = \Z_{p^r} \rtimes \Z_{p^r}$ that have a large center, $Z(G) = pG \cong \Z_{p^{r-1}} \times \Z_{p^{r-1}}$. Section \ref{sec:Paley} highlights new constructions of Paley-type PDSs and how they can be used to construct new difference sets using the twin prime power construction. In Section \ref{sec:product}, we show that group ring equations will be forced to hold for various product constructions previously shown to work for abelian groups by characters. In this case, since the input group ring relations for the product are identical for the abelian case as for the nonabelian case we can avoid messy group ring equations. The product will take as input the nonabelian PDSs from Sections \ref{sect:aff}  and \ref{sec:SemidirectLargeCenter} and will generate new PDSs in many nonabelian groups. We conclude in Section \ref{sect:possnext} with some remarks and a considerable list of open problems.


\section{Preliminaries}
\label{sect:prelim}

\subsection{Partial difference sets}

Let $G$ be a finite group of order $v$ with a subset 
$D$ of order $k$.  Suppose further that the differences $d_1{d_2}^{-1}$ for
$d_1, d_2 \in D, d_1 \neq d_2$, represent each 
nonidentity element of $G$ precisely $\lambda$ times. Then, $D$ is a {\it $(v,k, \lambda)$-difference set} (DS) in $G$.

Now suppose that $G$ is a finite group of order $v$ with a subset 
$D$ of order $k$.  Suppose further that the differences $d_1{d_2}^{-1}$ for
$d_1, d_2 \in D, d_1 \neq d_2$, represent each 
nonidentity element of $D$ exactly $\lambda $ times and the  
nonidentity element of $G - D$ exactly $\mu $ times.  Then, $D$ is called a 
{\it $(v, k , \lambda, \mu)$-partial difference set} (PDS) in $G$.  The
survey article of Ma is an excellent resource for these sets \cite{Ma_1994}.  Typically, a proper PDS $D$
for which $\lambda \neq \mu$ will have the two properties that the identity element from $G$ is not in $D$ and that $x \in D$ implies 
$x^{-1} \in D$, and such a PDS is called {\it regular}.
A PDS having parameters $(n^2, r(n-1),
n+r^2-3r,r^2-r)$ for some natural number $r$ is called a {\it Latin square type PDS}. 
Similarly, a PDS having parameters $(n^2, r(n+1),
-n+r^2+3r,r^2+r)$ is called a {\it negative Latin square type PDS}.
Assuming the PDS is regular, the Cayley graph for a $(v,k,\lambda , \mu)$-PDS will always be a \textit{strongly regular graph} with the same parameters; that is, the corresponding Cayley graph has $v$ vertices, every vertex has $k$ neighbors, adjacent vertices have $\lambda$ common neighbors, and nonadjacent vertices have $\mu$ common neighbors.  

The earliest examples of PDSs date back to Paley \cite{Paley_1933}, though his work long predates the systematic study of PDSs.  Paley showed that the 
nonzero squares in $\F_q$ will be a $(q, \frac{q-1}{2},\frac{q-5}{4},\frac{q-1}{4})$-PDS in the additive group when $q$ is a prime power and $q \equiv 1 \pmod 4$.  We will call these {\it Paley partial difference sets}, and more generally $(v, \frac{v-1}{2},\frac{v-5}{4},\frac{v-1}{4})$-PDSs will be {\it Paley-type partial difference sets}.  This family of PDSs has received much attention in abelian groups.  Davis was the first to construct Paley-type PDSs in groups that are not elementary abelian \cite{Davis_1994}, work that was subsequently generalized in \cite{Leung_Ma_1995} and \cite{Polhill_2002}.  Polhill found examples where $v$ was not a prime power in \cite{Polhill_2009b}.  

Paley \cite{Paley_1933} showed in the case when $q$ is a prime power and $q \equiv 3 \pmod 4$ that the set of nonzero squares will instead be a $(q,\frac{q-1}{2},\frac{q-3}{4})$-difference set, now called a {\it Paley-Hadamard difference set}.  Stanton and Sprott found new examples of Paley-Hadamard difference sets \cite{Stanton_Sprott_1958} which are known as {\it Twin prime power difference sets} in the additive group of $\F_q \times \F_{q+2}$, when $q$ and $q+2$ are both prime powers. 

Partial difference sets are often studied within the context of the particular group ring $\Z[G]$, whether the group $G$ is abelian or not.  For a subset $D$ of a 
group $G$, we abuse notation slightly and write $D \colonequals \sum_{d \in D} d$ and $D^{(-1)} \colonequals \sum_{d \in D}d^{-1}$.   The following equation will then hold for a regular $(v,k,\lambda, \mu)$-partial
difference set $D$ in the group $G$ with identity $1_G$:

$$ DD^{(-1)} = DD= \lambda D + \mu (G - D - 1_G) + k 1_G = (\lambda - \mu)D + \mu G + (k - \mu) 1_G. $$

\subsection{Association schemes}

When studying PDSs, and in particular those with (negative) Latin square type parameters, one often has a partition of the nonidentity elements into multiple PDSs. As such, they form a multi-class association scheme, and so it will be helpful to consider these mathematical structures.

Let $\calX$ be a finite set. An {\it association scheme} with $t$ classes on $\calX$ is a partition of
$\calX\times \calX$ into sets $R_0$, $R_1, \ldots , R_t$ (relations, or associate classes) such that

\begin{enumerate}
\item $R_0=\{(x,x) : x\in \calX\}$ (the diagonal relation);
\item for each $\ell$, $R_{\ell}^t = \{ (y,x) : (x,y) \in R_{\ell} \} = R_{\ell'}$ for some $\ell'$;
\item for all $i,j,k$ in $\{0,1,2,\ldots ,t\}$ there is an integer $p_{ij}^k$ such that, for all $(x,y)\in R_k$,
$$|\{z\in \calX : (x,z)\in R_i\; {\rm and}\; (z,y)\in R_j\}|=p_{ij}^k.$$
\end{enumerate}

When $p_{ij}^k = p_{ji}^k$ for all $k,i,j$ then the association scheme is called {\it commutative}.
If $\ell = \ell'$ for all $\ell$, then the association scheme is said to be {\it symmetric}; 
otherwise, it is {\it nonsymmetric}.   

Each of the relations $R_l$ can be interpreted as a directed
graph with vertex set $\calX$ and edge set $R_l$, $\Gamma_l = (\calX,R_l)$ for all $l$.
An association scheme can be viewed as a decomposition of the 
complete directed graph with vertex set $\calX$ into directed graphs $\Gamma_l$ with the 
property that for $i,j,k \in \{ 1,2,\cdots d \}$ and for $xy \in 
E(\Gamma_k)$,
$$|\{ z \in X : xz \in E(\Gamma_i) \text{ and } zy \in E(\Gamma_j) \}| = 
p^k_{ij},$$
where $E(\Gamma_i)$ is edge set of graph $\Gamma_i$.  The graphs $\Gamma_i$ are
called the {\it graphs} of the association scheme.  Likewise, a symmetric association scheme can be viewed as a decomposition of the complete graph on vertex set $\calX$ into undirected graphs.  A strongly regular
graph $\Gamma$ corresponds to a symmetric association scheme with two classes, where $R_1 = \{(x,y) : xy \in E(\Gamma)\}$ and $R_2 = \{(x,y) : x \neq y \text{ and } (x,y) \notin R_1 \}$.  

For an association scheme, we can interpret the relationas as adjacency matrices for the graphs, i.e., $ \{ 0,1 \}$-matrices indexed by the vertex set $V$ such that for the matrix $A_i$ there is a 1 in position $ (x,y)$ exactly when $(x,y) \in R_i$.  Then,we have:
\

\begin{enumerate}
\item $A_0 = I$;
\item $A_0 + A_1 + ...+ A_d  = J,$ the matrix with all 1's;
\item for each $i$ there is some $i'$ with ${A_i}^T = A_{i'}$;
\item $A_iA_j = \Sigma_k {p_{ij}}^k A_k$.
\end{enumerate}

This collection forms what is known as the {\it Bose-Mesner algebra}, and what is key for this article is that for a commutative association scheme
it will necessarily follow that the Bose-Mesner algebra is commutative, so that the graph adjacency matrices satisfy $A_iA_j=A_jA_i$ for all $i,j$.

Given an association scheme, we can take unions of classes to produce graphs
with larger edge sets, with such unions termed {\it fusions}.  Fusions are not always
association schemes in general, but when a particular association scheme
has the property that any of its fusions also forms an association scheme
we call the scheme {\it amorphic}.  For an excellent introduction to amorphic association schemes, see \cite{vanDam_Muzychuk_2010}.

Partial difference sets give rise to strongly regular Cayley graphs.  When
we partition the nonidentity elements of a group into partial difference
sets, we also have a partitioning of the complete graph 
with vertex set the group elements into strongly regular Cayley graphs.  

Now we are ready to consider what will be an essential ingredient for many of the constructions in this article, a powerful result of van Dam:

\begin{thm}\cite[Theorem 3]{vanDam_2003}
\label{VDtheorem}
Let $\{ \Gamma_1,\Gamma_2,\dots, \Gamma_d \}$ be an edge-decomposition of the complete 
graph on a set $X$, where each $\Gamma_i$ is strongly regular.  If the $\Gamma_i$
are all of Latin square type or all of negative Latin square type,
then the decomposition is a $d$-class amorphic association scheme on $X$.
\end{thm}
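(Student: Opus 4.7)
The plan is to work with the $\{0,1\}$-adjacency matrices $A_1, \ldots, A_d$ of the $\Gamma_i$ acting on $V := \R^X$ and prove that the span of $\{I, A_1, \ldots, A_d\}$ is a commutative subalgebra of $M_{n^2}(\R)$ closed under multiplication. This is exactly a Bose-Mesner algebra and so yields a commutative symmetric association scheme, and the amorphic property will fall out of the same product formula. I will carry out the Latin square case in detail; the negative Latin square case is completely parallel.

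First I extract the spectral data. With parameters $(n^2, r_i(n-1), n+r_i^2-3r_i, r_i^2-r_i)$, each $A_i$ has eigenvalues $k_i = r_i(n-1)$ on $V_0 := \R \mathbf{1}$ and $n-r_i$, $-r_i$ on $V_0^\perp$, with multiplicities $r_i(n-1)$ and $(n-1)(n+1-r_i)$ respectively. The valency identity $\sum_i k_i = n^2 - 1$ forces $\sum_i r_i = n + 1$. Because each $A_i$ is symmetric with only two eigenvalues on $V_0^\perp$, the orthogonal projection onto the $(n-r_i)$-eigenspace $E_i^+$ is $P_i^+ = \frac{1}{n}(A_i + r_i I)$ on $V_0^\perp$. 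Summing, using $\sum A_i = J - I$ (which restricts to $-I$ on $V_0^\perp$) together with $\sum r_i = n+1$, yields $\sum_i P_i^+ = I$ on $V_0^\perp$. Since $\dim E_i^+ = r_i(n-1)$ and $\sum_i \dim E_i^+ = (n-1)(n+1) = \dim V_0^\perp$, the sum $\sum P_i^+ = I$ of orthogonal projections whose ranks saturate the ambient dimension forces $V_0^\perp = \bigoplus_i E_i^+$ as an orthogonal direct sum, and hence $P_i^+ P_j^+ = 0$ for $i \ne j$.

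This pairwise orthogonality is the key consequence. Substituting $A_i = nP_i^+ - r_i I$ on $V_0^\perp$, expanding $A_i A_j$, and restoring the correct $J$-component by matching the action on $\mathbf{1}$ produces the clean product rule
\[ A_i A_j = -r_j A_i - r_i A_j - r_i r_j I + r_i r_j J \qquad (i \ne j), \]
while the diagonal $A_i^2$ is handled by the SRG equation. So $\{I, A_1, \ldots, A_d\}$ spans a commutative algebra closed under multiplication, and the $A_i$ are symmetric $\{0,1\}$-matrices partitioning $J - I$: precisely a commutative symmetric $d$-class association scheme.

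For amorphicity I would observe that the product rule is stable under fusion: for any partition $\{1, \ldots, d\} = S_1 \sqcup \cdots \sqcup S_t$, writing $B_\ell := \sum_{i \in S_\ell} A_i$ and $R_\ell := \sum_{i \in S_\ell} r_i$, termwise summation gives $B_\ell B_m = -R_m B_\ell - R_\ell B_m - R_\ell R_m I + R_\ell R_m J$ for $\ell \ne m$, and $B_\ell^2$ reduces to the Latin-square-type SRG equation with parameter $R_\ell$, so every fusion is again an association scheme of the same type. The negative Latin square case runs identically with $P_i^-$ projecting onto the $(r_i - n)$-eigenspace, $\sum r_i = n - 1$, and signs adjusted throughout. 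The main obstacle in the argument is the rank-saturation step that upgrades $\sum P_i^+ = I$ to $P_i^+ P_j^+ = 0$: the identity alone does not force pairwise orthogonality, but the rigid (negative) Latin-square eigenvalue pattern makes $\sum \dim E_i^+$ exactly fill $V_0^\perp$, compelling the projections to be orthogonal in pairs and hence the $A_i$ to commute.
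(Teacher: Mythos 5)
This statement is not proved in the paper at all: it is quoted verbatim from van Dam's paper (Theorem 3 of \emph{Strongly regular decompositions of the complete graph}), so there is no internal proof to measure you against; what you have written is a self-contained reconstruction, and it is correct. Your route is essentially the known spectral one. The parameter and eigenvalue data are right (eigenvalues $n-r_i$, $-r_i$ on $\mathbf{1}^\perp$ with multiplicities $r_i(n-1)$ and $(n-1)(n+1-r_i)$, and $\sum_i r_i = n+1$ from the valency count), the identity $\sum_i P_i^+ = I$ on $\mathbf{1}^\perp$ is correct, and the product rule $A_iA_j = -r_jA_i - r_iA_j - r_ir_jI + r_ir_jJ$ for $i \neq j$ checks out on both $\mathbf{1}$ and $\mathbf{1}^\perp$; since $J = I + \sum_k A_k$, the span of $\{I, A_1, \dots, A_d\}$ is closed under multiplication and commutative, giving the scheme, and the termwise fusion computation does collapse to the $LS_{R_\ell}(n)$ equation (the mixed terms $\sum_{i \in S_\ell} r_iA_i$ cancel), so every fusion is again a Latin-square-type decomposition and the scheme is amorphic; the negative Latin square case goes through with $P_i^- = \frac{1}{n}(r_iI - A_i)$ and $\sum_i r_i = n-1$ exactly as you indicate. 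One small correction of emphasis: the ``rank-saturation'' step you single out as the main obstacle is not actually needed. Since the $P_i^+$ are self-adjoint idempotents, $\sum_i P_i^+ = I$ alone forces $P_i^+P_j^+ = 0$ for $i \neq j$ (apply $\|v\|^2 = \sum_i \|P_i^+v\|^2$ to $v = P_j^+w$); your dimension count is harmless but redundant, and in any case the trace identity makes it automatic. Compared with van Dam's published argument, which establishes commutativity by showing that the relevant eigenspaces of the $\Gamma_i$ decompose $\mathbf{1}^\perp$ and then invokes the fusion characterization of amorphic schemes going back to Ito--Munemasa--Yamada, your proof is the same idea carried out directly at the level of the Bose--Mesner algebra, with the added benefit that the explicit product formula makes the amorphic property an immediate byproduct rather than a separate step.
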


We interpret the implications of this result into the context of PDSs to form the following, which we will use throughout the paper.  

\begin{cor}
\label{AmorphicPDS}
Suppose the nonidentity elements of a group $G$ can be partitioned into a collection of PDSs all of Latin square type or all of negative Latin square type, $\{ P_1, P_2,\dots,P_n \}$. Then, a union of any number of these PDSs is also a PDS of that same type.  Moreover, $P_iP_j = P_jP_i$ in the group ring $\Z[G]$.
\end{cor}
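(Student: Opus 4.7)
The plan is to apply van Dam's theorem (Theorem \ref{VDtheorem}) directly and then unwind what amorphicity gives in the group-ring language. First I would set up the correspondence: each regular PDS $P_i$, being inverse-closed and not containing the identity, gives a strongly regular Cayley graph $\Gamma_i = \Cay(G, P_i)$ on vertex set $G$. The hypothesis that $\{P_1, \ldots, P_n\}$ partitions $G \setminus \{1_G\}$ translates immediately into the statement that $\{\Gamma_1, \ldots, \Gamma_n\}$ is an edge-decomposition of the complete graph on $G$, and by assumption all the $\Gamma_i$ are of the same (negative) Latin square type. Thus Theorem \ref{VDtheorem} applies and yields an $n$-class amorphic association scheme on $G$.

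Next I would deduce the first conclusion from amorphicity. By definition, every fusion of classes in an amorphic scheme is again an association scheme, and in our situation a two-class fusion (a union $P_{i_1} \cup \cdots \cup P_{i_s}$ together with its complement in $G \setminus \{1_G\}$) corresponds to a strongly regular graph. Because all constituents share the same (negative) Latin square type, the fused graph does too -- this is part of van Dam's classification of amorphic schemes of strongly regular graphs. Translating this Cayley-graph statement back gives that $P_{i_1} \cup \cdots \cup P_{i_s}$ is a PDS of the same type, as claimed.

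For the commutativity assertion, I would argue as follows. Since each $P_i$ is inverse-closed, the relation $R_i = \{(x,y) : xy^{-1} \in P_i\}$ is symmetric, so the association scheme is symmetric and therefore commutative; equivalently, the Bose-Mesner algebra is commutative, so the adjacency matrices satisfy $A_i A_j = A_j A_i$. To transport this into $\Z[G]$, I would use the standard computation
\[
(A_i A_j)_{x,y} \;=\; |\{z \in G : xz^{-1} \in P_i \text{ and } zy^{-1} \in P_j\}|,
\]
which, via the substitution $d_1 = xz^{-1}$, $d_2 = zy^{-1}$, is exactly the coefficient of $xy^{-1}$ in the group-ring product $P_i P_j$. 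Since this holds for every pair $(x,y)$ and $xy^{-1}$ ranges over all of $G$, the matrix identity $A_i A_j = A_j A_i$ is equivalent to $P_i P_j = P_j P_i$ in $\Z[G]$.

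The only real obstacle is checking the hypotheses of van Dam's theorem -- namely that we genuinely have an edge-decomposition of a complete graph into strongly regular graphs of a common (negative) Latin square type -- but this is immediate from the assumptions of the corollary. Everything else is bookkeeping: amorphicity hands us the fusion statement, and the Cayley-graph-to-group-ring dictionary converts the commutativity of the Bose-Mesner algebra into the desired identity.
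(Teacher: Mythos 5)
Your proof is correct and follows essentially the same route as the paper's: apply Theorem \ref{VDtheorem} to the strongly regular Cayley graph decomposition, invoke amorphicity to get that every fusion is strongly regular of the same type (hence every union of the $P_i$ is a PDS of that type), and use commutativity of the resulting scheme to transfer $A_iA_j = A_jA_i$ to $P_iP_j = P_jP_i$ in $\Z[G]$. The only difference is that you spell out the symmetric-implies-commutative step and the adjacency-matrix-to-group-ring dictionary explicitly, which the paper leaves implicit.
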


\begin{proof}
Such a collection of PDSs corresponds to a strongly regular Cayley graph decomposition of the complete graph on $|G|$ points, which will be amorphic by Theorem \ref{VDtheorem}.  As such, any fusion of the graph is a strongly regular graph of the same type as the PDSs $P_i$ and therefore any union of PDSs $\bigcup_i P_i$ corresponds to another PDS of that type.  Amorphic association schemes are commutative, and it follows that the graph adjacency matrices commute and therefore so do the group ring equations for the PDSs: i.e.,  $P_iP_j = P_jP_i$.
\end{proof}

We remark that such a partition of the nonidentity elements of a group $G$ is called a \textit{Cayley (association) scheme}.   Cayley schemes are equivalent to \textit{Schur rings} \cite{Klin_1985}, and amorphic association schemes of (negative) Latin square type were previously used in \cite{Feng_He_Chen_2020} to construct examples of PDSs in nonabelian $2$-groups.

\subsection{Quadratic forms}

Quadratic forms have been used for constructing PDSs of both Latin square type and negative Latin
square type (see~\cite{Ma_1994}).  Let $q$ be a power of a prime. We denote the field with $q$
elements by $\F_q$. A \textit{quadratic form} $Q$ on a $d$-dimensional vector space $\F_q^d$ over $\F_q$ is a function $Q:\F_q^d\to\F_q$ such that:
\begin{itemize}
\item[(i)] $Q(\alpha x)=\alpha^2 Q(x)$ for all $\alpha\in\F_q$ and all $x\in\F_q^d$, and
\item[(ii)] the function $\beta:\F_q^d\times\F_q^d\to\F_q$ given by $\beta(x,y)=Q(x+y)-Q(x)-Q(y)$ is $\F_q$-bilinear.
\end{itemize}
A quadratic form $Q$ is said to be \textit{nondegenerate} if $\beta(x,y)=0$ for all $y\in\F_q^d$ implies $x=0$.  We have the following well-known result.

\begin{thm}\cite[Theorem 3.28]{Ball_2015}
\label{thm:Qform}
 Let $Q$ be a nondegenerate quadratic form on $V = \F_q^{2m}$.  There exists a basis for $V$ such that exactly one of the following holds for all $x = (x_1, \dots, x_{2m}) \in V$:
 \begin{itemize}
  \item[(i)] $Q(x) = x_1x_2 + x_3x_4 + \cdots + x_{2m-1}x_{2m}$, or
  \item[(ii)] $Q(x) = x_1x_2 + x_3x_4 + \cdots + x_{2m-3}x_{2m-2} + x_{2m-1}^2 + bx_{2m}^2,$
where $-b$ is a nonsquare in $\F_q$.
 \end{itemize}
 \end{thm}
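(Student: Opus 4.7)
The plan is to proceed by induction on $m$, successively peeling off two-dimensional "hyperbolic planes" until we are reduced either to the empty space (case (i)) or to a $2$-dimensional anisotropic piece (case (ii)). Throughout, we work with the associated symmetric bilinear form $\beta(x,y) = Q(x+y) - Q(x) - Q(y)$, which is nondegenerate precisely when $Q$ is (in odd characteristic, where $2$ is invertible, so we recover $Q(x) = \tfrac{1}{2}\beta(x,x)$).

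The first step is the extraction of hyperbolic pairs. I would show that whenever $\dim V \ge 3$, the form $Q$ must admit a nonzero isotropic vector $v$, i.e. $Q(v) = 0$ with $v \ne 0$; this is a standard Chevalley--Warning style counting argument, since a single quadratic polynomial in at least three variables over $\F_q$ must have nontrivial zeros. Given such a $v$, nondegeneracy of $\beta$ yields $u' \in V$ with $\beta(v,u') \ne 0$; after scaling assume $\beta(v,u') = 1$, and set $u \colonequals u' - Q(u') v$. A direct computation gives $Q(u) = 0$ and $\beta(v,u) = 1$, so $H \colonequals \langle v, u \rangle$ is a two-dimensional subspace on which $Q$ has the form $x_1 x_2$.

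Next, I would pass to the orthogonal complement $W \colonequals H^{\perp} = \{ w \in V : \beta(w, H) = 0 \}$. Nondegeneracy of $\beta$ on $H$ forces $V = H \oplus W$ as an orthogonal direct sum, and $Q|_W$ is again nondegenerate on a $(2m-2)$-dimensional space. Repeating this step at most $m-1$ times reduces the problem to a nondegenerate quadratic form on a $2$-dimensional space, which is the base case.

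For the base case $m = 1$, I would split into two subcases. If $Q$ has a nonzero isotropic vector, the argument above produces a hyperbolic pair spanning all of $V$, giving form (i). Otherwise $Q$ is anisotropic; diagonalising $\beta$ (possible in odd characteristic) yields coordinates in which $Q(x_1,x_2) = a_1 x_1^2 + a_2 x_2^2$ with $a_1,a_2 \ne 0$, and after rescaling the first coordinate I can take $a_1 = 1$, giving $Q = x_1^2 + b x_2^2$ with $b = a_2$; the anisotropy condition $x_1^2 + b x_2^2 \ne 0$ for $(x_1,x_2) \ne (0,0)$ is precisely the condition that $-b$ is a nonsquare in $\F_q$. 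Finally, to see that (i) and (ii) are mutually exclusive, I would observe that the form in (i) always has a nontrivial isotropic vector while the form in (ii) is anisotropic on its last two coordinates (and since the hyperbolic and elliptic types have different numbers of isotropic vectors in total, a single form cannot be equivalent to both).

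The main obstacle, and the step I would spend most care on, is the base case: explicitly verifying that the "anisotropic" condition in two dimensions is equivalent to $-b$ being a nonsquare, and checking that the two normal forms (i) and (ii) are genuinely inequivalent (so that the classification is well-defined, not just exhaustive). The inductive step itself is largely bookkeeping once hyperbolic pairs can be produced, but the characteristic-$2$ issues do not arise here because the paper's applications concern odd $p$, matching the diagonalisability arguments above.
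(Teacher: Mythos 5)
The paper does not prove this statement: it is quoted verbatim from Ball \cite[Theorem 3.28]{Ball_2015} as a known classification, so there is no internal proof to compare against. Your argument is the standard one (Witt decomposition: split off hyperbolic planes by induction, then classify the $2$-dimensional anisotropic residue), and its overall architecture is sound --- the construction $u = u' - Q(u')v$ of a hyperbolic partner, the orthogonal splitting $V = H \perp H^{\perp}$, and the use of Chevalley--Warning to produce isotropic vectors in dimension $\ge 3$ are all correct.

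There is, however, one step in the base case that fails as literally written. You claim that after diagonalising to $Q = a_1x_1^2 + a_2x_2^2$ you can ``rescale the first coordinate'' to force $a_1 = 1$. Rescaling $x_1 \mapsto cx_1$ replaces $a_1$ by $c^2a_1$, so it only alters $a_1$ within its square class; if $a_1$ happens to be a nonsquare you cannot reach $1$ this way. The correct fix is to note that any nondegenerate binary quadratic form over $\F_q$ represents every nonzero element of $\F_q$ (the sets $\{a_1x_1^2\}$ and $\{c - a_2x_2^2\}$ each have $(q+1)/2$ elements, hence intersect), so there is a vector $e_1$ with $Q(e_1) = 1$; taking $e_1$ together with a $\beta$-orthogonal complement yields $Q = x_1^2 + bx_2^2$, and anisotropy is then equivalent to $-b$ being a nonsquare, as you say. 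Separately, your first remark on exclusivity (form (i) is isotropic, form (ii) is anisotropic on its last two coordinates) does not by itself distinguish the two forms for $m \ge 2$, since both then admit nonzero singular vectors; the parenthetical count of singular vectors ($q^{2m-1} + \ep(q^m - q^{m-1})$ for type $\ep$) is the argument that actually does the work and should be promoted to the main line of the proof.
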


If (i) of Theorem \ref{thm:Qform} holds, then we say $Q$ is \textit{hyperbolic} and has type $\ep = +1$ (often denoted simply with ``$+$'' when used as a superscript), and, if (ii) of Theorem \ref{thm:Qform} holds, then we say $Q$ is \textit{elliptic} and has type $\ep = -1$ (often denoted simply with ``$-$'').

 
\section{Nonabelian PDS families related to affine polar graphs}
\label{sect:aff}

Let $q$ be an odd prime power and $m \ge 2$.  Let $V = \F_q^{2m}$ be equipped with a nondegenerate quadratic form $Q$ of type $\ep = \pm 1$.  In particular, by Theorem \ref{thm:Qform}, if $x = (x_1, \dots, x_{2m}) \in V$, we will assume
\[Q(x) = x_1x_2 + x_3x_4 + \cdots + x_{2m-1}x_{2m}\]
if $\ep = 1$, and we will assume
\[Q(x) = x_1x_2 + x_3x_4 + \cdots + x_{2m-3}x_{2m-2} + x_{2m-1}^2 + bx_{2m}^2, \]
where $-b$ is a nonsquare in $\F_q$, if $\ep = -1$.  Note that there is a nondegenerate symmetric bilinear form $\beta$ associated with $Q$.  

The graphs $\VO^\ep(2m, q)$ are defined by taking the vectors in $V$ to be vertices, with distinct vectors $u, v \in V$ adjacent if $Q(v - u) = 0$.  As noted in \cite[Section 3.3.1]{Brouwer_VanMaldeghem_2022}, $\VO^\ep(2m, q)$ is a strongly regular graph with
\begin{align*}
 v       &= q^{2m},\\
 k       &= (q^m - \ep)(q^{m-1} + \ep),\\ 
 \lambda & = q(q^{m-1} - \ep)(q^{m-2} + \ep) + q - 2,\\ 
 \mu     &= q^{m-1}(q^{m-1} + \ep).
\end{align*}

The graphs $\VNO^\ep(2m, q)$ are defined by taking the vectors in $V$ to be vertices, with distinct vectors $u, v \in V$ adjacent when $Q(u - v)$ is a nonzero square in $\F_q$.  As noted in \cite[Section 3.3.2]{Brouwer_VanMaldeghem_2022}, the graph $\VNO^\ep(2m, q)$ is a strongly regular graph with
\begin{align*}
 v       &= q^{2m},\\ 
 k       &= \frac{1}{2}(q-1)(q^m - \ep)q^{m-1},\\ 
 \lambda &= \frac{1}{4}q^{m-1}(q-1)(q^m - q^{m-1} - 2\ep) + \ep q^{m-1},\\
 \mu     &= \frac{1}{4}q^{m-1}(q-1)(q^m - q^{m-1} - 2\ep).
\end{align*}

Finally, we note that the complement to $\VO^\ep(2m, q) \cup \VNO^\ep(2m, q)$ in the complete graph on $V$ will itself be a strongly regular graph isomorphic to $\VNO^\ep(2m, q)$. To see this, note that this new graph has adjacency defined by $u \sim v$ when $Q(v - u)$ is a nonsquare in $\F_q$.  If $a$ is a nonsquare in $\F_q$, the map $\phi: v \mapsto av$ interchanges nonsquares with nonzero squares, and $Q(v^\phi - u^\phi) = Q(av - au) = a^2Q(v-u)$ is still a nonzero square if and only if $Q(v - u)$ is, meaning $\phi$ is an isomorphism between $\VNO^\ep(2m, q)$ and this new complement graph, which we will denote by $\VNO_2^\ep(2m, q)$.  Therefore, the complete graph on $V$ can be partitioned into $\VO^\ep(2m, q)$, $\VNO^\ep(2m, q)$, and $\VNO_2^\ep(2m,q)$.  We remark that when $\ep = +1$ the graphs are of Latin square type, and when $\ep = -1$ the graphs are of negative Latin square type.

\subsection{Automorphisms of affine polar graphs}
\label{sect:autaff}

We represent the elements of the affine general linear group $\AGL(2m,q)$ in the form $[M, u]$, where $M \in \GL(2m, q)$ and $u \in V$, where for all (row vectors) $v \in V,$
\[v^{[M,u]} \colonequals vM + u,\]
and multiplication in $\AGL(2m,q)$ is defined by  \[[M_1, v_1][M_2, v_2] = [M_1M_2, v_1M_2 + v_2].\]  
The special orthogonal group $\SO^\ep(2m,q)$ is the set of all determinant $1$ matrices in $\GL(2m,q)$ preserving the bilinear form $\beta$ (and quadratic form $Q$), and, given a subspace $U$ of $V$, denote the subgroup of translations of $V$ by vectors in $U$ by $T_U$, i.e.,
\[ T_U \colonequals \{ [I, u] : u \in U\},\]
where $I$ is the $2m \times 2m$ identity matrix.  Hence,
\[ \ASO^\ep(2m, q) \colonequals \{[M,v] : M \in \SO^\ep(2m,q), v \in V \} \cong T_V:\SO^\ep(2m, q),\]
where $M \in  \SO^\ep(2m,q)$ is identified naturally with $[M,0] \in \AGL(2m, q)$ and the ``$:$'' denotes a semidirect product. 

\begin{lem}
 \label{lem:autaff}
The group $\ASO^\ep(2m,q)$ is a subgroup of automorphisms of each of $\VO^\ep(2m, q)$, $\VNO^\ep(2m, q)$, and $\VNO_2^\ep(2m, q)$. 
\end{lem}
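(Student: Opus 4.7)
The plan is to observe that all three adjacency relations on $V$ are defined purely in terms of the value of $Q(v-u)$: adjacency in $\VO^\ep(2m,q)$ corresponds to $Q(v-u) = 0$, adjacency in $\VNO^\ep(2m,q)$ to $Q(v-u)$ being a nonzero square, and adjacency in $\VNO_2^\ep(2m,q)$ to $Q(v-u)$ being a nonsquare. Thus it suffices to show that every element of $\ASO^\ep(2m,q)$ preserves the value $Q(v-u)$ for all pairs $u,v \in V$; then each of the three partition cells of $\F_q$ (namely $\{0\}$, the nonzero squares, and the nonsquares) is preserved, and hence so is each adjacency relation.

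First I would take an arbitrary $[M,w] \in \ASO^\ep(2m,q)$ and compute directly from the definition of the action that
\[v^{[M,w]} - u^{[M,w]} = (vM + w) - (uM + w) = (v-u)M.\]
This is the key computational observation: translations cancel in differences, so only the linear part $M$ matters for preserving the relevant quantity.

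Next I would invoke the defining property of $\SO^\ep(2m,q)$: since $M \in \SO^\ep(2m,q)$, by definition $M$ preserves the quadratic form $Q$, i.e., $Q(xM) = Q(x)$ for all $x \in V$. Applying this with $x = v - u$ gives
\[Q\bigl(v^{[M,w]} - u^{[M,w]}\bigr) = Q((v-u)M) = Q(v-u),\]
which is exactly the required invariance. Combining this with the observation in the opening paragraph finishes the argument, since each of $\{0\}$, the set of nonzero squares, and the set of nonsquares in $\F_q$ is preserved setwise (trivially, as $Q(v-u)$ itself is preserved).

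There is not really a substantive obstacle here; the statement is essentially a definitional check built from two ingredients (translations cancel in differences, and $\SO^\ep$ preserves $Q$). The only minor point worth being careful about is the order of multiplication in $\AGL(2m,q)$ as set up in the paper (row vectors acted on the right), but once the computation of $v^{[M,w]} - u^{[M,w]} = (v-u)M$ is written down correctly, the rest is immediate.
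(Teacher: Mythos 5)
Your proposal is correct and follows the same argument as the paper: compute $v^{[M,w]} - u^{[M,w]} = (v-u)M$ so translations cancel, then use that $M \in \SO^\ep(2m,q)$ preserves $Q$, which preserves adjacency in all three graphs. Your additional remark that each of $\{0\}$, the nonzero squares, and the nonsquares is preserved setwise is just making explicit what the paper leaves implicit.
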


\begin{proof}
 Let $[M,w] \in \ASO^\ep(2m,q)$ and $u, v \in V$.  Then,
 \[ Q(v^{[M,w]} - u^{[M,w]}) = Q((vM + w) - (uM + w)) = Q((v-u)M) = Q(v-u),\]
 and so $[M,w]$ preserves adjacency in all three graphs.
\end{proof}

We remark that $T_V \subseteq \ASO^\ep(2m,q)$ is an elementary abelian regular subgroup of automorphisms of each of $\VO^\ep(2m, q)$, $\VNO^\ep(2m, q)$, and $\VNO_2^\ep(2m, q)$, and so the corresponding decomposition of the nonidentity elements of $T_V$ into PDSs is an amorphic Cayley scheme.  In the following subsections, we will find ``twists'' of the group $T_V$ in $\ASO^\ep(2m,q)$ -- roughly speaking, replacing certain translations $[I, v]$ by elements of the form $[M, v]$, where $M \neq I$ -- to provide new examples of PDSs in nonabelian groups.

\subsection{A family of PDSs in nonabelian groups of order $q^{2m}$}

Fix $\ep = \pm 1$.  Let $v$ be a nonsingular vector in $V$, i.e., $Q(v) \neq 0$, so $\left\langle v \right\rangle$ is a nonsingular subspace.  The stabilizer of $\left\langle v \right\rangle$ in $\SO^\ep(2m,q)$ contains an elementary abelian group $H$ of order $q$; see, e.g., \cite[Sections 2.2.1, 8.2]{Bray_Holt_RoneyDougal_2013}, \cite[Tables 3.5 E, F and Proposition 4.1.6]{Kleidman_Liebeck_1990}, or \cite[Section 3.7.4]{Wilson_2009}. Note that $vM = v$ for all $M \in H$: we know that $0M = 0$, and that $vM = v$ follows from the Orbit-Stabilizer Theorem.

Since $v$ is nonsingular, we have $V = \left\langle v \right\rangle \oplus v^\perp$, where
\[ v^\perp \colonequals \{ u \in V : \beta(u,v) = 0\};\]
to see this, note that the map $\beta( -, v): V \to \F_q$ is a linear transformation with kernel $v^\perp$.  Since $vM = v$ for all $M \in H$ and $M$ preserves $\beta$, we have $v^\perp M = v^\perp$, i.e., $v^\perp$ is an $H$-invariant subspace.  

\begin{rem}
 Choosing a nonsingular vector $v$ is not strictly necessary for this construction: as long as the elementary abelian group $H$ stabilizes a decomposition $V = \left\langle v \right\rangle \oplus U$ for some complementary subspace $U$ to $\left\langle v \right\rangle$, the construction will work. 
\end{rem}

Since $H$ is an elementary abelian group of order $q$, $H$ is naturally isomorphic to $(\F_q,+)$.  For each $\alpha \in \F_q$, we will denote by $A_\alpha$ the corresponding element of $H$ under this natural isomorphism.  Define \[ \calA \colonequals \{[A_\alpha, \alpha v] : \alpha \in \F_q\}.\]  Since $v$ is fixed by right multiplication by elements of $H$, $\calA$ is an elementary abelian group of order $q$ that is itself naturally identified with $(\F_q, +)$.

Recall that $T_{v^\perp}$ is the set of elements of the form $[I,u]$ for $u \in v^\perp.$  Then, for $u \in v^\perp$, we have
\[ [A_\alpha, \alpha v]^{-1} [I, u] [A_\alpha, \alpha v] = [A_\alpha^{-1}, -\alpha v] [I,u] [A_\alpha, \alpha v] = [I, uA] \in T_{v^\perp},\]
so $\calA$ normalizes $T_{v^\perp}.$

Define
\[ G_1^\ep\colonequals \left\langle T_{v^\perp}, \calA \right\rangle = T_{v^\perp}:\calA.\]

\begin{thm}
 \label{thm:nonabpds}
 The group $G_1^\ep$ is a nonabelian group of order $q^{2m}$ in which the nonidentity elements can be partitioned into $D_0 \cup D_1 \cup D_2$, where each $D_i$ is a PDS, $\Cay(G_1^\ep, D_0) \cong \VO^\ep(2m, q)$, and $\Cay(G_1^\ep, D_1) \cong \Cay(G_1^\ep,D_2) \cong \VNO^\ep(2m, q)$.  
\end{thm}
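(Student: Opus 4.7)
The plan is to recognize each of the affine polar graphs $\VO^\ep(2m,q)$, $\VNO^\ep(2m,q)$, and $\VNO_2^\ep(2m,q)$ as a Cayley graph on $G_1^\ep$ by exhibiting $G_1^\ep$ as a regular subgroup of automorphisms of each. Once that is in place, the three graphs edge-partition the complete graph on $V$, so the corresponding connection sets will partition $G_1^\ep \setminus \{e\}$; strong regularity of the three graphs then forces these connection sets to be PDSs (inverse-closedness follows because the graphs are undirected), and the stated isomorphism types hold by construction.

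First I would verify the structural claims about $G_1^\ep$. Since $v$ is nonsingular, $v^\perp \cap \langle v\rangle = 0$, so $T_{v^\perp} \cap \calA = \{[I,0]\}$ and $|G_1^\ep| = |T_{v^\perp}| \cdot |\calA| = q^{2m-1} \cdot q = q^{2m}$. Because $H$ is a nontrivial subgroup of $\SO^\ep(2m,q)$ that fixes $v$, it must act nontrivially on $v^\perp$; choosing $A_\alpha \in H$ and $u \in v^\perp$ with $uA_\alpha \neq u$, a direct comparison of $[I,u][A_\alpha,\alpha v]$ and $[A_\alpha,\alpha v][I,u]$ using the $\AGL$-multiplication rule shows these elements do not commute, so $G_1^\ep$ is nonabelian.

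Next I would establish that $G_1^\ep$ acts regularly on $V$. Using the semidirect decomposition, every element is uniquely of the form $[I,u][A_\alpha,\alpha v] = [A_\alpha, uA_\alpha + \alpha v]$, and this element sends $0$ to $uA_\alpha + \alpha v$. As $u$ ranges over $v^\perp$ and $\alpha$ ranges over $\F_q$, the vector $uA_\alpha$ ranges over $v^\perp$ (since $A_\alpha$ is an $\F_q$-linear automorphism of $v^\perp$), so the orbit of $0$ is all of $V = v^\perp \oplus \langle v\rangle$. Transitivity together with $|G_1^\ep| = |V|$ gives regularity.

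Finally, Lemma \ref{lem:autaff} places $G_1^\ep \le \ASO^\ep(2m,q)$ inside the automorphism group of each of the three affine polar graphs. Identifying $V$ with $G_1^\ep$ via $g \mapsto 0^g$, each graph becomes a Cayley graph $\Cay(G_1^\ep, D_i)$, where $D_i$ is the set of $g \in G_1^\ep$ such that $0^g$ is adjacent to $0$ in the respective graph. The three adjacency relations on $V$ partition the off-diagonal pairs from $0$, yielding the partition $G_1^\ep \setminus \{e\} = D_0 \cup D_1 \cup D_2$, and since $\VNO_2^\ep(2m,q) \cong \VNO^\ep(2m,q)$ (already observed in the excerpt), we obtain $\Cay(G_1^\ep, D_0) \cong \VO^\ep(2m,q)$ and $\Cay(G_1^\ep, D_1) \cong \Cay(G_1^\ep, D_2) \cong \VNO^\ep(2m,q)$; strong regularity of these graphs (with inverse-closed connection sets) makes each $D_i$ a PDS. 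The main obstacle is mostly bookkeeping: keeping straight the two semidirect-product multiplications (inside $G_1^\ep$ and inside $\AGL(2m,q)$) and correctly invoking nontriviality of the $H$-action on $v^\perp$; once the regular action by graph automorphisms is in hand, the PDS property is automatic.
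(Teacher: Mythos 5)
Your proposal is correct and follows essentially the same route as the paper's proof: compute the order, exhibit noncommuting elements using the nontrivial action of $H$ on $v^\perp$, show $G_1^\ep$ acts regularly on $V$, and then invoke Lemma \ref{lem:autaff} to view the three affine polar graphs as Cayley graphs on $G_1^\ep$, whose strong regularity yields the PDS partition. The only difference is cosmetic: you spell out the final identification $g \mapsto 0^g$ and the inverse-closedness of the connection sets, which the paper leaves implicit in ``the result follows.''
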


\begin{proof}
 First, we have $|G_1^\ep| = q^{2m}$ since $G_1^\ep = T_{v^\perp}:\calA$, $|T_{v^\perp}| = |v^\perp| = q^{2m-1}$, and $|\calA| = q$.  Moreover, since $H$ acts faithfully on $V$ and fixes $\left\langle v \right\rangle$ pointwise, there exist $u \in v^\perp$ and $A \in H$ such that $uA \neq u$.  Since $A \in H$, there is a unique $w \in \left\langle v \right\rangle$ such that $[A,w] \in \left\langle \calA \right\rangle$.  Thus, 
 \[ [I, u] [A, w] = [A, uA + w] \neq [A, u + w] = [A,w][I,u], \]
 and hence $G_1^\ep$ is nonabelian.
 
 Let $x \in V$.  Then, we may write $x = w + u$, where $w \in \left\langle v \right\rangle$ and $u \in v^\perp$.  There is a unique $A \in H$ such that $[A,w] \in \calA$, and so $[A, x] = [A, w + u] = [A,w][I,u]$ is an element of $G_1^\ep$ such that $0^{[A,x]} = x$, and hence $G_1^\ep$ is transitive on $V$.  Since $|G_1^\ep| = |V|$, in fact $G_1^\ep$ acts regularly on $V$.
 
 Finally, since $A \in \SO^\ep(2m,q)$ for all $[A,x] \in G_1^\ep$, $G_1^\ep \le \ASO^\ep(2m,q)$, and, by Lemma \ref{lem:autaff}, $G_1^\ep$ is a subgroup of automorphisms of $\VO^\ep(2m, q)$, $\VNO^\ep(2m, q)$, and $\VNO_2^\ep(2m, q)$.  The result follows.
 \end{proof}

\begin{ex}
\label{ex:aff1}
 We can construct a concrete example for each $\ep$, $q$, and $m$.  When $\ep = 1$, for $\alpha \in \F_q$ we define 
 \[ C_\alpha \colonequals \begin{pmatrix}
            1      & 0       & 0 & \alpha   \\
            0      & 1       & 0 & -\alpha  \\
            \alpha & -\alpha & 1 & \alpha^2 \\
            0      & 0       & 0 & 1        \\   
                \end{pmatrix}, 
\]
and when $\ep = -1$, for $\alpha \in \F_q$ we define
\[ C_\alpha \colonequals \begin{pmatrix}
            1      & -\alpha^2 & \alpha & 0 \\
            0      & 1         & 0      & 0  \\
            0      & -2\alpha  & 1      & 0 \\
            0      & 0         & 0      & 1        \\   
                \end{pmatrix}. 
\]
Then, we may choose 
\[ A_\alpha \colonequals \left(\begin{array}{c|c}
                          I_{2m - 4} & 0        \\
                          \hline
                          0          & C_\alpha \\
                         \end{array}\right).
\]
Let $\{e_i : 1 \le i \le 2m\}$ be the standard basis for $V$.  Then, we may choose $v = e_1 + e_2$ if $\ep = 1$ and $v = e_2$ if $\ep = -1$.  

As another example, if $m > 2$, for $\alpha \in \F_q$, if 
\[ B_\alpha \colonequals \begin{pmatrix}
            1      & 0       & 0       & 0   \\
            0      & 1       & -\alpha & 0   \\
            0      & 0       & 1       & 0 \\
            \alpha & 0       & 0       & 1        \\   
                \end{pmatrix}, \]
then we may choose  
\[ A_\alpha \colonequals \left(\begin{array}{c|c}
                          B_\alpha   & 0        \\
                          \hline
                          0          & I_{2m-4} \\
                         \end{array}\right)
\]
with $v = e_5 + e_6$ (regardless of the value of $\ep$).  Thus, when $m > 2$, we may actually assume $G_1^{+} = G_1^{-}$.
\end{ex}
 
\begin{rem}
\label{rem:G1expp}
 Every element of $G_1^\ep$ can be expressed uniquely as $[A_\alpha, \alpha v + u]$, where $\alpha \in \F_q$ and $u \in v^\perp$.  Since
 \[ [A_\alpha, \alpha v + u]^p = \left[A_\alpha^p, p \alpha v + u \sum_{i = 0}^{p-1} A_{\alpha}^i \right] = \left[I, u(A_\alpha - I)^{p-1}\right],\]
 the choices of $A_{\alpha}$ from Example \ref{ex:aff1} show that, when $p > 3$ or $m > 2$, we can choose $G_1^\ep$ to have exponent $p$.  That such groups can be chosen to have exponent $3$ when $p = 3$ and $m = 2$ follows from direct inspection with \GAP \cite{GAP4}.
\end{rem} 
 
\subsection{A second family of PDSs in nonabelian groups of order $q^{2m}$} 
\label{subsec:secondfam}

The second family of PDSs requires a bit more care.  We will assume for this construction that either $m > 2$ or, if $m = 2$, $\ep = 1$.  As in the last example in Example \ref{ex:aff1}, for $\alpha \in \F_q$, if 
\[ B_\alpha \colonequals \begin{pmatrix}
            1      & 0       & 0       & 0   \\
            0      & 1       & -\alpha & 0   \\
            0      & 0       & 1       & 0 \\
            \alpha & 0       & 0       & 1        \\   
                \end{pmatrix}, \]
we then define  
\[ A_\alpha \colonequals \left(\begin{array}{c|c}
                          B_\alpha   & 0        \\
                          \hline
                          0          & I_{2m-4} \\
                         \end{array}\right).
\] 
(Again, we allow $m = 2$ as long as the quadratic form $Q$ is hyperbolic.)  In this case, $H \colonequals \{A_\alpha : \alpha \in \F_q\}$ is an elementary abelian group of order $q$ preserving the form $Q$.  Define 
\[ U \colonequals \left\langle e_1, e_4, \dots, e_{2m} \right\rangle.\] 
Direct calculation shows that $U$ is an $H$-invariant subspace of $V$.  

Define \[\calB \colonequals \left\langle [A_\alpha, \alpha e_2 + \beta e_3] : \alpha, \beta \in \F_q \right\rangle.\]

\begin{lem}
 \label{lem:Belab}
 The group $\calB$ is an elementary abelian group of order $q^2$.  In particular, for each $w \in \left\langle e_2, e_3\right\rangle$, there unique element $[A,x] \in \calB$ with $x = w$.
\end{lem}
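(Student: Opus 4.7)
The plan is to work directly with the semidirect-product multiplication rule $[M_1, v_1][M_2, v_2] = [M_1 M_2,\, v_1 M_2 + v_2]$ and compute everything explicitly inside $\AGL(2m,q)$. First I would record the action of $A_{\alpha'}$ on the relevant basis vectors: reading off the block $B_{\alpha'}$, we have $e_2 A_{\alpha'} = e_2 - \alpha' e_3$ and $e_3 A_{\alpha'} = e_3$. Using this together with the fact that $H = \{A_\alpha : \alpha \in \F_q\}$ is elementary abelian (so $A_\alpha A_{\alpha'} = A_{\alpha + \alpha'}$), a short calculation gives
\[ [A_\alpha, \alpha e_2 + \beta e_3]\,[A_{\alpha'}, \alpha' e_2 + \beta' e_3] = [A_{\alpha+\alpha'},\, (\alpha+\alpha') e_2 + (\beta + \beta' - \alpha \alpha') e_3]. \]
The right-hand side is symmetric in the pairs $(\alpha, \beta)$ and $(\alpha', \beta')$, because the only ``twisting'' term $\alpha \alpha'$ is itself symmetric, so the generators pairwise commute and $\calB$ is abelian.

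Next I would observe that the same formula shows that the set $S := \{[A_\alpha, \alpha e_2 + \beta e_3] : \alpha, \beta \in \F_q\}$ is closed under multiplication; a one-line check using $(\alpha e_2 + \beta e_3) A_{-\alpha} = \alpha e_2 + (\alpha^2 + \beta) e_3$ shows it is also closed under inversion. Hence $\calB = S$. Since $H$ acts faithfully, $\alpha \mapsto A_\alpha$ is injective, so one recovers $\alpha$ from the first coordinate of an element of $\calB$ and then $\beta$ from the $e_3$-component of the second; thus $(\alpha,\beta) \mapsto [A_\alpha, \alpha e_2 + \beta e_3]$ is a bijection $\F_q^2 \to \calB$. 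This simultaneously gives $|\calB| = q^2$ and the uniqueness claim: for $w = \gamma e_2 + \delta e_3 \in \langle e_2, e_3 \rangle$, the one and only $[A,x] \in \calB$ with $x = w$ is $[A_\gamma, \gamma e_2 + \delta e_3]$.

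Finally, to promote ``abelian of order $q^2$'' to ``elementary abelian'', I would apply $[M,v]^n = [M^n,\, v(I + M + \cdots + M^{n-1})]$ with $n = p$. The first coordinate becomes $A_\alpha^p = A_{p\alpha} = I$; for the second, $e_2 A_\alpha^i = e_2 - i\alpha e_3$ gives $\sum_{i=0}^{p-1}(\alpha e_2 + \beta e_3)A_\alpha^i = p\alpha e_2 + \bigl(p\beta - \alpha^2 \tfrac{p(p-1)}{2}\bigr)e_3$, which vanishes in $\F_q$ because $p$ is odd. The main point — and essentially the only place where anything could go wrong — is the symmetry of the $-\alpha\alpha'$ correction in the product formula: a priori, twisting the translations $\alpha e_2 + \beta e_3$ by the nonidentity matrices $A_\alpha$ could produce a genuinely nonabelian $2$-generator nilpotent group. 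The specific coupling between the first coordinate $A_\alpha$ and the $e_2$-component $\alpha e_2$ is precisely what makes the commutator term symmetric and collapses the would-be nonabelian structure to elementary abelian of the prescribed order.
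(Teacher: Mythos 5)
Your proof is correct and takes essentially the same route as the paper's: both rest on the explicit product computation $[A_\alpha, \alpha e_2 + \beta e_3][A_{\alpha'}, \alpha' e_2 + \beta' e_3] = [A_{\alpha+\alpha'}, (\alpha+\alpha')e_2 + (\beta+\beta'-\alpha\alpha')e_3]$, whose symmetry gives commutativity and whose form gives closure, the count $q^2$, and the uniqueness of the translation part. Your explicit verification that $p$-th powers are trivial (using that $\tfrac{p(p-1)}{2}\equiv 0 \pmod p$ for odd $p$) just spells out a detail the paper leaves implicit in ``the result follows.''
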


\begin{proof}
 Noting that $H$ fixes $e_3$, a direct calculcation shows that, for all $\alpha, \beta, \gamma, \delta \in \F_q$, we have
 \[ [A_\alpha, \alpha e_2 + \beta e_3][A_\gamma, \gamma e_2 + \delta e_3] = [A_\gamma, \gamma e_2 + \delta e_3][A_\alpha, \alpha e_2 + \beta e_3] = [A_{\alpha + \gamma}, (\alpha + \gamma)e_2 + (\beta + \delta - \alpha \gamma)e_3].\]
 The result follows.
\end{proof}

Recalling that $U$ is $H$-invariant, for any $u \in U$, we have
\begin{align*} [A_\alpha, \alpha e_2 + \beta e_3]^{-1} [I, u] [A_\alpha, \alpha e_2 + \beta e_3] &= [A_\alpha^{-1}, -\alpha e_2 - (\alpha^2 + \beta)e_3] [I,u] [A_\alpha, \alpha e_2 + \beta e_3]\\ 
                        &= [I, uA_\alpha] \in T_U,\end{align*}
so $\calB$ normalizes $T_{U}.$

Define
\[ G_2\colonequals \left\langle T_U, \calB \right\rangle = T_{U}: \calB.\]

\begin{thm}
 \label{thm:nonabpds2}
 Let $m > 2$ or, if $m = 2$, then $\ep = 1$.  The group $G_2$ is a nonabelian group of order $q^{2m}$ in which the nonidentity elements can be partitioned into $D_0 \cup D_1 \cup D_2$, where each $D_i$ is a PDS, $\Cay(G_2, D_0) \cong \VO^\ep(2m, q)$, and $\Cay(G_2, D_1) \cong \Cay(G_2,D_2) \cong \VNO^\ep(2m, q)$.  
\end{thm}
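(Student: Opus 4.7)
My plan is to proceed in close parallel with the proof of Theorem \ref{thm:nonabpds}: I will establish (i) the correct order and nonabelianness of $G_2$, (ii) that $G_2$ acts regularly on $V$, and (iii) that $G_2 \le \ASO^\ep(2m,q)$, at which point Lemma \ref{lem:autaff} together with the regular action will transfer the three-class edge partition of the complete graph on $V$ coming from $\VO^\ep(2m,q)$, $\VNO^\ep(2m,q)$, and $\VNO_2^\ep(2m,q)$ into a partition of $G_2 \setminus \{1_{G_2}\}$ into the three desired PDSs, via the standard correspondence between regular automorphism groups and (partial) difference sets.

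For the order, Lemma \ref{lem:Belab} gives $|\calB| = q^2$, and $|T_U| = |U| = q^{2m-2}$; since $G_2 = T_U:\calB$, we get $|G_2| = q^{2m}$. For nonabelianness, the matrix $A_\alpha$ acts nontrivially on $U$ (indeed $e_1 \mapsto e_1 + \alpha e_4$), so there exist $u \in U$ and $\alpha \in \F_q$ with $uA_\alpha \neq u$; comparing $[I,u][A_\alpha, \alpha e_2]$ with $[A_\alpha, \alpha e_2][I,u]$ in $\AGL(2m,q)$ shows these do not commute.

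For regularity, I would first argue transitivity by writing an arbitrary $x \in V$ as $x = w + u$ with $w \in \left\langle e_2, e_3 \right\rangle$ and $u \in U$ using the direct sum decomposition $V = \left\langle e_2, e_3 \right\rangle \oplus U$; by Lemma \ref{lem:Belab} there is a unique $[A, w] \in \calB$, and then $[A,w][I,u] = [A, w + u]$ lies in $G_2$ and sends $0$ to $x$. Since $|G_2| = |V|$, the action is regular. Every element of $G_2$ has its linear part in $H \le \SO^\ep(2m,q)$, since this is true of the generators of $T_U$ and $\calB$, so $G_2 \le \ASO^\ep(2m,q)$. Lemma \ref{lem:autaff} then gives that $G_2$ is a regular subgroup of automorphisms of each of $\VO^\ep(2m,q)$, $\VNO^\ep(2m,q)$, and $\VNO_2^\ep(2m,q)$, yielding the claim.

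The main subtlety, and the reason for the hypothesis that either $m > 2$ or $\ep = 1$ when $m = 2$, is that one must know that $H$ actually preserves $Q$. For $m > 2$, the matrix $A_\alpha$ acts trivially on the last $2m - 4$ coordinates, so preservation of $Q$ reduces to a direct check on the hyperbolic pairs $\{e_1, e_2\}$ and $\{e_3, e_4\}$ and is independent of whether the remaining form is hyperbolic or elliptic. For $m = 2$ with $\ep = -1$, however, the form $x_3^2 + b x_4^2$ interacts nontrivially with the $(e_1,e_4)$-shear, and a direct computation shows that $Q$ is not preserved; this is exactly why the second family requires the stated restriction. The only other minor bookkeeping point is confirming that $\calB$ normalizes $T_U$, which is already handled by the displayed conjugation identity preceding the definition of $G_2$ in the excerpt.
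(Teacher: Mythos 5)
Your proposal is correct and takes essentially the same route as the paper's proof: the same order count via Lemma \ref{lem:Belab}, a noncommuting pair coming from a vector of $U$ moved by $A_\alpha$, regularity via the decomposition $V = \left\langle e_2, e_3\right\rangle \oplus U$, and then $G_2 \le \ASO^\ep(2m,q)$ together with Lemma \ref{lem:autaff} (your added check that $H$ preserves $Q$ only when $m>2$ or $\ep=+1$ is a welcome verification of what the paper asserts in Subsection \ref{subsec:secondfam}). One notational slip: with the paper's row-vector action $v \mapsto vM$, one has $e_1A_\alpha = e_1$ and $e_4A_\alpha = e_4 + \alpha e_1$ (the paper uses $e_4A_1 = e_1 + e_4$), not $e_1 \mapsto e_1 + \alpha e_4$; this does not affect your argument, since either way some vector of $U$ is moved by $A_\alpha$.
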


\begin{proof}
 The proof is largely the same as that of Theorem \ref{thm:nonabpds}.  First, we have $|G_2| = q^{2m}$ since $G_2 = T_{U}:\calB$, $|T_{U}| = |U| = q^{2m-2}$, and $|\calB| = q^2$.       
 Moreover, not all vectors in $U$ are fixed by $H$; for example, $e_4A_1 = e_1 + e_4$, and so 
 \[ [I, e_4] [A_1, e_2] = [A_1, e_1 + e_2 + e_4 ] \neq [A_1, e_2 + e_4] = [A_1, e_2][I, e_4], \]
 and hence $G_2$ is nonabelian.
 
 Let $x \in V$.  Then, we may write $x = w + u$, where $w \in \left\langle e_2, e_3 \right\rangle$ and $u \in U$.  There is a unique $A \in H$ such that $[A,w] \in  \calB$, and so $[A, x] = [A, w + u] = [A,w][I,u]$ is an element of $G_2$ such that $0^{[A,x]} = x$, and hence $G_2$ is transitive on $V$.  Since $|G_2| = |V|$, in fact $G_2$ acts regularly on $V$.
 
 Finally, since $A \in \SO^\ep(2m,q)$ for all $[A,x] \in G_2$, $G_2 \le \ASO^\ep(2m,q)$, and, by Lemma \ref{lem:autaff}, $G_2$ is a subgroup of automorphisms of $\VO^\ep(2m, q)$, $\VNO^\ep(2m, q)$, and $\VNO_2^\ep(2m, q)$.  The result follows.
 \end{proof}

 \begin{rem}
  \label{rem:G2expp}
  A similar calculation to that done in Remark \ref{rem:G1expp} shows that $G_2$ has exponent $p$.
 \end{rem}

 \begin{thm}
  \label{thm:G1notG2}
  Let $m > 2$ or, if $m = 2$, then $\ep = +1$.  The groups $G_1^\ep$ and $G_2$ are not isomorphic.
 \end{thm}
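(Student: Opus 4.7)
The plan is to distinguish $G_1^\ep$ and $G_2$ by showing that their derived subgroups have different orders, namely $|[G_1^\ep, G_1^\ep]| = q^2$ and $|[G_2, G_2]| = q$.

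I would begin by observing that, in both $G_1^\ep = T_{v^\perp}:\calA$ and $G_2 = T_U:\calB$, the translation subgroup is normal and abelian, the quotient ($\calA$ or $\calB$) is abelian, and the quotient acts on the translation subgroup via the matrices in $H$. In particular, the derived subgroup sits inside the translation subgroup and is generated by commutators of the form
\[ [[I,u],[A_\alpha,w]] = [I, u(A_\alpha - I)], \]
which follows directly from the conjugation identity $[A_\alpha,w]^{-1}[I,u][A_\alpha,w] = [I, uA_\alpha]$ computed in the text just before the definition of each $G_i$. Identifying $[I, x]$ with $x$, one therefore has
\[ [G_i, G_i] \ \longleftrightarrow \ \mathrm{span}_{\F_q}\{u(A_\alpha - I) : u \in W, \alpha \in \F_q\}, \]
where $W = v^\perp$ for $i = 1$ and $W = U$ for $i = 2$.

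Next, I would compute this span in each case using the explicit matrices from Example \ref{ex:aff1}. For $G_2$ (with $A_\alpha$ given by the matrix $B_\alpha$ acting on the first four coordinates and trivially thereafter), the only vector in the generating set $\{e_1, e_4, e_5, \ldots, e_{2m}\}$ of $U$ on which $H$ acts nontrivially is $e_4$, and $e_4(A_\alpha - I) = \alpha e_1$. Consequently
\[ [G_2, G_2] = \{[I, \alpha e_1] : \alpha \in \F_q\}, \]
which has order $q$. For $G_1^\ep$ with $m > 2$ (using the same $B_\alpha$ and $v$ a nonsingular vector in $\langle e_5, e_6\rangle$), the subspace $v^\perp$ contains all of $e_1, e_2, e_3, e_4$; since $e_4(A_\alpha - I) = \alpha e_1$ and $e_2(A_\alpha - I) = -\alpha e_3$, while the only nonzero entries of $B_\alpha - I$ lie in columns $1$ and $3$, the span equals $\langle e_1, e_3\rangle$, giving $|[G_1^\ep, G_1^\ep]| = q^2$. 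For the remaining case $m = 2, \ep = +1$, one uses the matrix $C_\alpha$ from Example \ref{ex:aff1} with $v = e_1 + e_2$; a parallel direct computation gives $u(C_\alpha - I) = (u_3\alpha, -u_3\alpha, 0, -2u_2\alpha + u_3\alpha^2)$ for $u\in v^\perp$, whose span over $u_2, u_3 \in \F_q$ is $\langle e_1 - e_2, e_4\rangle$, again of $\F_q$-dimension $2$, so $|[G_1^+, G_1^+]| = q^2$.

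In every case covered by the statement, $|[G_1^\ep, G_1^\ep]| = q^2 \neq q = |[G_2, G_2]|$, and therefore the two groups cannot be isomorphic. The only real subtlety in the argument is keeping track of which coordinate directions of $V$ lie in the translation subspace: the translation subspace $v^\perp$ of $G_1^\ep$ contains both $e_2$ and $e_4$ (the two vectors on which $H$ acts nontrivially), while the translation subspace $U$ of $G_2$ is designed to exclude $e_2$ (which instead gets absorbed into the twisted complement $\calB$). This single extra direction in $v^\perp$ is precisely what doubles the $\F_q$-dimension of the derived subgroup.
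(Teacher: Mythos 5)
Your argument is correct, and it takes a genuinely different route from the paper: you distinguish $G_1^\ep$ from $G_2$ by the order of the derived subgroup, whereas the paper distinguishes them by the order of the center. The paper's proof fixes the same explicit matrices from Example \ref{ex:aff1}, notes that the central elements are the translations $[I,w]$ with $w$ fixed by $H$ and lying in the translation subspace, and computes $|Z(G_1^\ep)| = q^{2m-3}$ versus $|Z(G_2)| = q^{2m-2}$ when $m > 2$ (and $q$ versus $q^2$ when $m = 2$, $\ep = +1$). Your computation is the natural dual: since each group is a semidirect product of two abelian groups ($T_{v^\perp}$ by $\calA$, resp.\ $T_U$ by $\calB$), its derived subgroup equals the subgroup generated by the commutators $[I,u(A_\alpha - I)]$, which is the $\F_q$-span you describe, and your evaluations of that span are all correct: $\langle e_1, e_3\rangle$ (for $m>2$ with the $B_\alpha$-matrices) or $\langle e_1 - e_2, e_4\rangle$ (for $m=2$, $\ep=+1$ with the $C_\alpha$-matrices) of order $q^2$ for $G_1^\ep$, versus $\langle e_1\rangle$ of order $q$ for $G_2$; the structural fact that $G' = [N,Q]$ when $G = N \rtimes Q$ with $N$ and $Q$ abelian is standard and you apply it correctly. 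Both proofs operate at the same level of generality (the specific choices of $H$ and $v$ from Example \ref{ex:aff1}) and require the same case split between $m > 2$ and $m = 2$, $\ep = +1$, so neither is appreciably shorter; taken together, the two arguments show the groups differ in both their centers and their derived subgroups.
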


 \begin{proof}
  If $m > 2$, we can choose $H$ to be the same group in each case.  If $W$ is the subspace of points fixed by $H$, then 
  \[ W = \left\langle e_1, e_3, e_5, \dots, e_{2m} \right\rangle,\]
  which has dimension $2m - 2$.  In each case, the central elements are of the form $[I, w]$, where $w \in W$.  Since $v = e_5 + e_6 \in W$, $|Z(G_1^\ep)| = q^{2m - 3}$.  On the other hand, $|Z(G_2)| = |W| = q^{2m - 2}$, which proves the claim for $m > 2$
  
  The proof is similar when $m = 2$ and $\ep = 1$: when $A_\alpha = C_\alpha$ (as in Example \ref{ex:aff1}), we see that the subspace of of points fixed by $H$ is $W = \left\langle e_1 + e_2, e_4\right\rangle$, and, since $v = e_1 + e_2$, 
  \[ Z(G_1^{+}) = \{[I, \beta e_4] : \beta \in \F_q\}.\]
  Thus, $|Z(G_1^{+})| = q$.  On the other hand, 
  \[ Z(G_2) = \{[I,w] : w \in W\},\]
  and so $|Z(G_2)| = q^2$, which proves the claim when $m = 2$ and $\ep = 1$. 
 \end{proof}

\subsection{A $(q+3)$-class amorphic association scheme in a group of order $q^4$} 
 
Let $V = GF(q)^4$, where $q$ is an odd prime.  Let $Q(x) = x_1x_2 + x_3x_4$, a hyperbolic form on $V$, and consider the group
\[ G \colonequals G_2 = T_U:\calB\]
defined in Subsection \ref{subsec:secondfam}, and define
\[ H\colonequals \{ B_\alpha : \alpha \in \F_q\}.\]

As in Theorem \ref{thm:nonabpds2}, we take $D_0$ to be the elements $[C, x]$ in $G$ where $Q(x) = 0$; $D_1$ to be the elements $[C,x]$ in $G$ where $Q(x)$ is a nonzero square; and $D_2$ to be the elements $[C,x]$ in $G$ where $Q(x)$ is a nonzero nonsquare.  Since each vector $x$ in $V$ occurs exactly once as the second component of an element $[C,x] \in G$, we may identify the elements of $G$ with the corresponding vector in the second component.  In other words, we identify $D_0$ with the set $V_0$ of vectors $x$ in $V$ such that $Q(x) = 0$, $D_1$ with the set $V_1$ of vectors $x$ in $V$ such that $Q(x)$ is a nonzero square, and $D_2$ with the set $V_2$ of vectors $x$ in $V$ such that $Q(x)$ is a nonsquare.  

\begin{lem}
\label{lem:partition}
The set $V_0$ can be partitioned into disjoint subsets of size $q^2 - 1$, where each subset is the set of nonzero vectors in a $2$-dimensional subspace of $V$.  Moreover, we can take each subset of the partition of $V_0$ to be $H$-invariant.  
\end{lem}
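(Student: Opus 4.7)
The plan is to produce a collection of $q+1$ totally isotropic $2$-dimensional subspaces of $V$ that are each $H$-invariant and that pairwise intersect only in $\{0\}$. Since each such subspace contributes $q^2-1$ nonzero vectors and $|V_0| = (q+1)(q^2-1)$ (recall $V_0$ is identified with $D_0$, whose cardinality equals the degree parameter of $\VO^+(4,q)$), such a collection will automatically yield the desired partition.

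Concretely, for each $a \in \F_q$ I set $\pi_a := \langle e_2 + a e_4,\, a e_1 - e_3 \rangle$, and I also set $\pi_\infty := \langle e_1, e_4 \rangle$. A general vector in $\pi_a$ has coordinates $(\beta a, \alpha, -\beta, \alpha a)$, on which $Q$ evaluates to $\beta a \cdot \alpha + (-\beta)(\alpha a) = 0$, and $\pi_\infty$ lies in $\{x_2 = x_3 = 0\}$, so every plane in the family is totally isotropic. For $a \neq a'$ in $\F_q$, matching parametrizations of $\pi_a$ and $\pi_{a'}$ forces $\alpha = \alpha'$ and $\beta = \beta'$, and then the first and fourth coordinates force $\alpha = \beta = 0$; the intersection of $\pi_\infty$ with any $\pi_a$ is trivial by projecting onto the second and third coordinates. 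Finally, using the explicit action $e_1 B_\alpha = e_1$, $e_3 B_\alpha = e_3$, $e_2 B_\alpha = e_2 - \alpha e_3$, $e_4 B_\alpha = e_4 + \alpha e_1$, one checks that $(e_2 + a e_4) B_\alpha = (e_2 + a e_4) + \alpha(a e_1 - e_3) \in \pi_a$ while $a e_1 - e_3$ is fixed, so each $\pi_a$ is $H$-invariant; $\pi_\infty$ is obviously $H$-invariant.

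The main subtlety to navigate is that the hyperbolic quadric admits two rulings of maximal totally isotropic subspaces, and only one of them is planewise $H$-stable. The ``obvious'' ruling containing the coordinate pair $\{\langle e_1, e_3\rangle,\, \langle e_2, e_4\rangle\}$ is unsuitable, because $\langle e_2, e_4\rangle B_\alpha = \langle e_2 - \alpha e_3,\, e_4 + \alpha e_1\rangle \neq \langle e_2, e_4\rangle$; the ``opposite'' ruling, which is precisely $\{\pi_a\}_{a\in\F_q} \cup \{\pi_\infty\}$, is the correct choice. Identifying this ruling is the key step, and once it is in hand the verifications above reduce to short direct computations.
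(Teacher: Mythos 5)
Your proposal is correct and follows essentially the same route as the paper: the planes $\pi_a = \langle e_2 + a e_4,\, a e_1 - e_3\rangle$ and $\pi_\infty = \langle e_1, e_4\rangle$ are exactly the subspaces $U_\alpha$ used in the paper's proof, with the same verification that $v_\alpha B_\beta = v_\alpha + \beta u_\alpha$ and $u_\alpha B_\beta = u_\alpha$, followed by the same counting argument $|V_0| = (q+1)(q^2-1)$. Your added observation that this family is the ruling of the hyperbolic quadric opposite to the one containing $\langle e_2, e_4\rangle$ is a nice conceptual gloss, but the construction itself is identical.
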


\begin{proof} 
 To see that we have a $H$-invariant partition for $V_0$, we define $v_\infty \colonequals e_4 = (0,0,0,1)$ and, for each $\alpha \in \F_q$, we define $v_\alpha \colonequals e_2 + \alpha e_4 = (0,1,0,\alpha)$.  Since $Q(v_\alpha) = 0$, both $v_\alpha, v_\alpha B \in V_0$ for all $B \in H$.  Moreover, if we define $u_\infty \colonequals e_1 = (1,0,0,0)$, $u_\alpha \colonequals \alpha e_1 - e_3 = (\alpha,0,-1,0)$ for $\alpha \in \F_q$, and $U_\alpha \colonequals \left\langle v_\alpha, u_\alpha\right\rangle$, for each $\alpha \in \F_q \cup \{\infty\}$ and $\beta \in \F_q$, we have
 \[ v_\alpha B_\beta = v_\alpha + \beta u_\alpha \in U_\alpha.\]  Since $u_\alpha \in \left\langle e_1, e_3 \right\rangle$, $u_\alpha B_\beta = u_\alpha$ for each $\alpha, \beta$, and thus each $U_\alpha$ is an $H$-invariant subspace.  It is routine to check that the $q+1$ subspaces $U_\alpha$, $\alpha \in \F_q \cup \{\infty\}$ are pairwise disjoint, and since $|V_0| = (q+1)(q^2 - 1)$, these subspaces form an $H$-invariant partition of $V$.
\end{proof}

\begin{prop}
\label{prop:VtoG}
 Let $D \subset V$ be a $H$-invariant subset of $V$, and view $(V,+)$ as the elementary abelian group of order $q^4$.  Then, $G$ is isomorphic to a regular subgroup of $\Aut(\Cay(V,D))$; that is, if 
 \[ D' \colonequals \{[C, x] \in G : x \in D\},\]
 then $\Cay(G, D') \cong \Cay(V,D)$.
\end{prop}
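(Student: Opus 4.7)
The plan is to exhibit $G$ as a regular subgroup of $\Aut(\Cay(V, D))$ and then invoke the standard correspondence between regular group actions on a graph and Cayley graph structure.

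First, I would check that the $H$-invariance hypothesis on $D$ forces each element of $G$ to act as a graph automorphism of $\Cay(V, D)$. Recall that $G \le \ASO^\ep(2m, q)$ acts on $V$ via $v^{[M, u]} = vM + u$. Given any $[A, x] \in G$ (where $A \in H$) and any two vertices $w_1, w_2 \in V$, a direct computation gives
\[
w_2^{[A, x]} - w_1^{[A, x]} \;=\; (w_2 A + x) - (w_1 A + x) \;=\; (w_2 - w_1)A.
\]
Since $A \in H$ and $D$ is $H$-invariant, $(w_2 - w_1)A \in D$ if and only if $w_2 - w_1 \in D$, so $[A, x]$ preserves adjacency in $\Cay(V, D)$. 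Combined with the fact, already established in Theorem \ref{thm:nonabpds2}, that $G$ acts regularly on $V$, this shows that $G$ embeds as a regular subgroup of $\Aut(\Cay(V, D))$.

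Next, I would apply the standard correspondence: fix the base vertex $0 \in V$ and define $\phi: G \to V$ by $\phi([A, x]) = 0^{[A, x]} = x$, which is a bijection by regularity. Setting
\[
S \;\colonequals\; \{g \in G : 0 \sim 0^g \text{ in } \Cay(V, D)\} \;=\; \{[A, x] \in G : x \in D\} \;=\; D',
\]
where we used that $0 \sim x$ in $\Cay(V, D)$ if and only if $x - 0 = x \in D$, the map $\phi$ becomes a graph isomorphism from $\Cay(G, S) = \Cay(G, D')$ to $\Cay(V, D)$, since for $g_1, g_2 \in G$ the relation $g_1^{-1} g_2 \in D'$ is equivalent, upon applying the automorphism $g_1$, to $\phi(g_1) \sim \phi(g_2)$ in $\Cay(V, D)$. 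This gives both conclusions of the proposition.

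No substantive obstacle arises: the proof reduces to the one-line computation above (exploiting $H$-invariance of $D$) together with the standard bookkeeping that translates a regular action by graph automorphisms into a Cayley graph isomorphism. The only genuine input is the $H$-invariance of $D$, which is precisely what makes the ``twisted'' regular action of $G$ compatible with the graph structure originally defined using $(V, +)$.
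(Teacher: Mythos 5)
Your argument is correct and is essentially the paper's proof: the same computation $w_2^{[A,x]} - w_1^{[A,x]} = (w_2 - w_1)A$ combined with the $H$-invariance of $D$ and the regularity of $G$ on $V$ from Theorem \ref{thm:nonabpds2}, with the standard passage from a regular automorphism group to a Cayley graph written out explicitly where the paper leaves it implicit. One small bookkeeping quibble: since $v^{[M,u]} = vM + u$ is a right action, the condition matching your bijection $\phi([A,x]) = x$ is $g_2 g_1^{-1} \in D'$ (apply $g_1$ to the adjacency $0 \sim 0^{g_2 g_1^{-1}}$ to get $\phi(g_1) \sim \phi(g_2)$), not $g_1^{-1} g_2 \in D'$; this is harmless in all cases used, where $D = -D$, so $D'$ is inverse-closed and the two conventions yield the same graph.
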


\begin{proof}
 Viewing $V$ additively, we then can define the graph $\Cay(V, D)$, where vectors $v$ and $w$ are adjacent iff $v - w \in D$.  For any $[C, x] \in G$, we have
\[ v^{[C,x]} - w^{[C,x]} = (vC + x) - (wC + x) = (v - w)C,\]
and so $v - w \in D$ iff $v^{[C,x]} - w^{[C,x]} \in D$.  Since $G$ is transitive on $V$ and preserves adjacency in $\Cay(V,D)$, the result follows.
\end{proof}

Recall the definitions of $D_1$ and $D_2$ in $G$ from above.  Let $\F_q \cup \{\infty\} = \{\alpha_3, \dots, \alpha_{q+3}\}$, define
\[ U_i \colonequals U_{\alpha_i}\]
as in the proof of Lemma \ref{lem:partition}, and define
\[ D_i \colonequals \left\{[C,x] \in G : x \in U_i - \{0\} \right\}.\]

\begin{thm}
 \label{thm:affassocscheme}
 Each $D_i$, $1 \le i \le q + 3$, is a PDS of Latin square type in $G$.  Consequently, $\{D_i : 1 \le i \le q + 3\}$ corresponds to a $(q+3)$-class amorphic association scheme, and a union of any number of these PDSs is also a PDS of Latin square type in $G$. 
\end{thm}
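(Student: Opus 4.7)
The plan is to reduce the PDS statement for each $D_i$ with $3 \le i \le q+3$ to the classical subspace PDS in the elementary abelian group, and then let the amorphic association scheme machinery handle all fusions at once. By Lemma \ref{lem:partition} each $U_i$ is an $H$-invariant $2$-dimensional subspace of $V$, so $U_i - \{0\}$ is an $H$-invariant subset, and Proposition \ref{prop:VtoG} yields a graph isomorphism $\Cay(G, D_i) \cong \Cay(V, U_i - \{0\})$, where $V$ is viewed additively as $(\F_q^4, +)$.

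Next I would verify that $U_i - \{0\}$ is a $(q^4, q^2 - 1, q^2 - 2, 0)$-PDS in $V$ by direct counting: for $x \in U_i - \{0\}$ the equation $a - b = x$ with $a, b \in U_i - \{0\}$ has $q^2 - 2$ solutions (namely $a \in U_i \setminus \{0, x\}$), and for $x \in V \setminus U_i$ it has no solutions because $U_i$ is a subgroup. These parameters are of Latin square type with $n = q^2$ and $r = 1$. The set $D_i$ is also inverse-closed in $G$: if $[C, x] \in D_i$, then $[C,x]^{-1} = [C^{-1}, -xC^{-1}]$, and $H$-invariance of $U_i$ forces $-xC^{-1} \in U_i - \{0\}$. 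Since $D_i$ is inverse-closed and $\Cay(G, D_i)$ is isomorphic to a strongly regular graph with the stated parameters, $D_i$ is itself a PDS of Latin square type in $G$ with the same parameters.

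Theorem \ref{thm:nonabpds2} (applied with $\ep = +1$) already establishes that $D_1$ and $D_2$ are PDSs of Latin square type in $G$, arising from $\VNO^+(4, q)$. Therefore the $q + 3$ sets $D_1, D_2, D_3, \ldots, D_{q+3}$ partition the nonidentity elements of $G$ into PDSs that are all of Latin square type. Corollary \ref{AmorphicPDS} (via van Dam's Theorem \ref{VDtheorem}) then immediately delivers both the $(q+3)$-class amorphic association scheme structure and the fact that every union of these PDSs is again a PDS of Latin square type in $G$.

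The only step that requires genuine thought is the transfer of the PDS equation from $\Z[V]$ to $\Z[G]$: the identification $[C,x] \mapsto x$ is only a bijection of sets, not a group isomorphism, so the translation relies on the equivalence between an inverse-closed subset being a regular PDS and its Cayley graph being strongly regular with matching parameters. Once inverse-closure of $D_i$ is recorded, this equivalence is automatic, so no genuinely hard step appears; the conceptual work was done in Lemma \ref{lem:partition} and Proposition \ref{prop:VtoG}.
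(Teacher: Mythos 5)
Your proposal is correct and follows essentially the same route as the paper: $D_1$ and $D_2$ come from Theorem \ref{thm:nonabpds2}, each $D_i$ with $i \ge 3$ is handled via Lemma \ref{lem:partition} and Proposition \ref{prop:VtoG} by recognizing $\Cay(V, U_i - \{0\})$ as a disjoint union of complete graphs, and Corollary \ref{AmorphicPDS} gives the amorphic scheme and all fusions. Your explicit verification of inverse-closure of $D_i$ and of the strongly-regular-Cayley-graph-to-PDS transfer simply makes precise a step the paper leaves implicit.
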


\begin{proof}
 First, $D_1$ and $D_2$ are PDSs of Latin square type in $G$ with $r = q(q-1)/2$ by Theorem \ref{thm:nonabpds2}.
 
 Since each $U_i$ is a subspace of size $q^2$, each graph $\Cay(V, U_i - \{0\})$ is a union of disjoint complete subgraphs, i.e., each graph $\Cay(V, U_i - \{0\})$ is a $(q^4, q^2 - 1, q^2 - 2, 0)$-strongly regular graph of Latin square type.  By Proposition \ref{prop:VtoG}, this means each $D_i$, $3 \le i \le q + 3$ is also a PDS of Latin square type.  Finally, since $\{D_i : 3 \le i \le q + 3\}$ is a partition of $D_0$ and $\{D_0, D_1, D_2\}$ is a partition of $G$, $\{D_i : 1 \le i \le q + 3\}$ is a $(q+3)$-class amorphic association scheme.  The result follows from Corollary \ref{AmorphicPDS}.
\end{proof}

\begin{cor}
 \label{cor:affpaley}
 The group $G$ contains a Paley-type PDS.
\end{cor}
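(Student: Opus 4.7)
The plan is to build a Paley-type PDS in $G$ by taking a suitable union of the PDSs $D_1,\dots,D_{q+3}$ produced by Theorem \ref{thm:affassocscheme}. Since that theorem gives an amorphic association scheme of Latin square type, any union of the $D_i$ is automatically a PDS of Latin square type in $G$, so the task reduces to choosing the union with the right cardinality.

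First I would observe that a Latin square type PDS with $n=q^2$ has parameters $(q^4,\,r(q^2-1),\,q^2+r^2-3r,\,r^2-r)$, and setting the cardinality equal to $(q^4-1)/2$ forces $r=(q^2+1)/2$. A quick substitution then shows
\[
q^2+r^2-3r=\tfrac{q^4-5}{4},\qquad r^2-r=\tfrac{q^4-1}{4},
\]
so every Latin-square-type PDS in $G$ of size $(q^4-1)/2$ is a $(q^4,(q^4-1)/2,(q^4-5)/4,(q^4-1)/4)$-PDS, i.e.\ of Paley type. Thus it suffices to produce a union of the $D_i$ of cardinality $(q^4-1)/2$.

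Next I would do the cardinality bookkeeping. From Theorem \ref{thm:nonabpds2} (applied with $\ep=+1$ and $m=2$), $|D_1|=|D_2|=\tfrac{1}{2}q(q-1)^2(q+1)$, and by construction $|D_i|=q^2-1$ for $3\le i\le q+3$. Set
\[
D \colonequals D_1 \cup D_{i_1}\cup\cdots\cup D_{i_{(q+1)/2}},
\]
where $\{i_1,\dots,i_{(q+1)/2}\}$ is any $(q+1)/2$-subset of $\{3,\dots,q+3\}$. Such a subset exists because $q$ is odd and $(q+1)/2\le q+1$. The computation
\[
|D|=\tfrac{1}{2}q(q-1)^2(q+1)+\tfrac{q+1}{2}(q^2-1)=\tfrac{1}{2}(q+1)(q-1)\bigl[q(q-1)+(q+1)\bigr]=\tfrac{1}{2}(q^2-1)(q^2+1)=\tfrac{q^4-1}{2}
\]
gives the desired cardinality. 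By Theorem \ref{thm:affassocscheme}, $D$ is a PDS of Latin square type in $G$, and by the parameter matching above, $D$ is a Paley-type PDS.

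There is no real obstacle here; the heavy lifting was done in Theorem \ref{thm:affassocscheme}, and the corollary is essentially a counting argument that identifies the unique value of $r$ making a Latin-square-type parameter set collapse to the Paley-type parameter set. The only thing to be careful about is that $(q+1)/2$ is an integer lying between $1$ and $q+1$, which holds for every odd prime power $q$.
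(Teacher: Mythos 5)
Your proposal is correct and follows essentially the same route as the paper: take $D_1$ together with half of the subspace PDSs $D_i$ ($i\ge 3$), invoke Theorem \ref{thm:affassocscheme} to conclude the union is a Latin square type PDS, and verify the cardinality is $(q^4-1)/2$. The only difference is that you make explicit the parameter check (that a Latin square type PDS of size $(q^4-1)/2$ forces $r=(q^2+1)/2$ and hence Paley-type parameters), a step the paper leaves implicit with ``the result follows.''
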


\begin{proof}
 Define
 \[D \colonequals D_1 \cup \bigcup_{i = 3}^{(q+5)/2} D_i, \]
 i.e., $D$ is the union of $D_1$ and half of the $D_i$'s, where $i \ge 3$.
 By Theorem \ref{thm:affassocscheme}, $D$ is a PDS of Latin square type, and, since $D$ contains the elements of $D_1$ and exactly half of the elements of $D_0$,
 \[ |D| = \frac{q(q-1)(q^2-1)}{2} + \frac{(q+1)(q^2 - 1)}{2} = \frac{q^4 - 1}{2}.\]
 The result follows.
\end{proof}

 
 \section{Partial difference sets in semidirect products with a large center}
 \label{sec:SemidirectLargeCenter}
 
 Let $p$ be a prime, and define $G \colonequals \left\langle x, y : x^{p^2} = y^{p^2} = 1, xy = yx \right\rangle \cong \Z_{p^2}^2$. The following sets were shown in~\cite{Davis_1994} to be $(p^4, p(p^2-1), 2p^2-3p,p^2-p)$-PDSs in $G$ for $1 \le i  \le p-1$:
 
 \[P_i = \left( \bigcup_{j=0}^{p-1} \left(\left\langle xy^{j+pi} \right\rangle - \left\langle x^p y^{pj} \right\rangle\right) \right) \cup \left(\left\langle x^{pi}y \right\rangle - \left\langle y^p \right\rangle\right) .\]
 
The following subgroups with the identity removed are trivial $(p^2,p^2-1,p^2-2,0)$-PDSs:
 
 $$ S_j = \left\langle xy^j \right\rangle - \{ 1 \} \mbox{ for }  0 \le j \le p-1,  S_{\infty} =  \left\langle y \right\rangle - \{ 1 \}.$$
 
 The $P_i$ and $S_j$ partition the nonidentity elements of $G$ into Latin-square type PDSs.  The next theorem applies Theorem~\ref{VDtheorem} to this collection.
 
  \begin{thm}
 \label{semidirectamorphicscheme}
The collection $\{ P_1, P_2,...,P_{p-1},S_0,S_1,\dots,S_{p-1},S_{\infty} \} $ is a $2p$-class amorphic association scheme on $G$.
 \end{thm}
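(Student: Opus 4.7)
The plan is to verify that the collection $\{P_1, \ldots, P_{p-1}, S_0, \ldots, S_{p-1}, S_\infty\}$ corresponds to an edge-decomposition of the complete graph on $G$ into $2p$ strongly regular Cayley graphs all of Latin square type, and then to invoke van Dam's theorem (Theorem \ref{VDtheorem}) to conclude that the decomposition is an amorphic scheme. The Latin square type of each $P_i$ follows from its parameters $(p^4, p(p^2-1), 2p^2 - 3p, p^2 - p)$ from \cite{Davis_1994}, which match $(n^2, r(n-1), n + r^2 - 3r, r^2 - r)$ with $n = p^2$ and $r = p$. For each $S_j$ and for $S_\infty$, a short counting argument shows that the nonidentity elements of any subgroup of $G$ of order $p^2$ form a $(p^4, p^2 - 1, p^2 - 2, 0)$-PDS in $G$, which is Latin square type with $r = 1$.

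The main task is verifying the partition, which reduces to enumerating the cyclic subgroups of $G \cong \Z_{p^2} \times \Z_{p^2}$ of order $p^2$. A generator $x^a y^b$ of such a subgroup must have at least one of $a, b$ coprime to $p$, and normalization yields exactly $p(p+1) = (p^4 - p^2)/(p^2 - p)$ such subgroups, namely $\left\langle xy^a\right\rangle$ for $0 \le a < p^2$ together with $\left\langle x^{pi}y\right\rangle$ for $0 \le i < p$ (the case $i = 0$ giving $\left\langle y\right\rangle$). Writing $a = j + pi$ with $0 \le i, j < p$, one checks that $(xy^{j+pi})^p = x^p y^{pj}$ and $(x^{pi}y)^p = y^p$. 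Hence when $i = 0$ the corresponding subgroup contributes $S_j$ (respectively $S_\infty$), and when $i \ge 1$ it contributes $\left\langle xy^{j+pi}\right\rangle - \left\langle x^p y^{pj}\right\rangle$ (respectively $\left\langle x^{pi}y\right\rangle - \left\langle y^p\right\rangle$), which are exactly the pieces assembled into $P_i$. Since distinct cyclic subgroups of order $p^2$ intersect only inside $pG$, each element of order $p^2$ lies in a unique piece of the collection; meanwhile each of the $p+1$ order-$p$ subgroups of $pG$ is realized once as the order-$p$ subgroup of some $\left\langle xy^j\right\rangle$ or of $\left\langle y\right\rangle$, so every nonidentity element of $pG$ lies in exactly one of $S_0, \ldots, S_{p-1}, S_\infty$. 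A final tally $(p-1)(p+1)(p^2-p) + (p+1)(p^2-1) = p^4 - 1$ confirms that the partition exhausts $G - \{1\}$.

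With the partition into $2p$ Latin square type PDSs established, Theorem \ref{VDtheorem} immediately yields the desired $2p$-class amorphic association scheme. The only nontrivial aspect of the argument is the bookkeeping above: matching the $p(p+1)$ cyclic subgroups of order $p^2$ with the $p - 1$ sets $P_i$ (each built from $p + 1$ such subgroups minus their order-$p$ subgroups) together with the $p + 1$ subgroups defining $S_0, \ldots, S_{p-1}, S_\infty$.
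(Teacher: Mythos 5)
Your proof is correct and follows essentially the same route as the paper: identify each $P_i$ (via the parameters from \cite{Davis_1994}) and each $S_j$, $S_\infty$ as Latin square type PDSs partitioning $G - \{1_G\}$, then apply Theorem \ref{VDtheorem}. The only difference is that you explicitly verify the partition by enumerating the $p(p+1)$ cyclic subgroups of order $p^2$, a bookkeeping step the paper asserts without proof, so your write-up is if anything more complete.
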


Combining Corollary~\ref{AmorphicPDS} with Theorem~\ref{semidirectamorphicscheme} implies that \[D \colonequals \left(\bigcup_{j=1}^{\frac{p-1}{2}} P_i \right) \cup \left(\bigcup_{j=0}^{\frac{p-1}{2}} S_j\right)\] is a Paley-type $\left(p^4, \frac{p^4-1}{2}, \frac{p^4-5}{4}, \frac{p^4-1}{4}\right)$-PDS.

 This construction was the first known PDS with Paley-type parameters in a group that was not elementary abelian; other abelian PDSs with these parameters have since appeared (see, for instance, \cite{Polhill_2002}). 
 
 We now show that a similar construction will produce Paley-type PDSs in certain nonabelian groups, which along with the construction in the previous section (Corollary \ref{cor:affpaley}) are the first such constructions of Paley-type PDSs in nonabelian groups known to these authors. Consider the group \[\widehat{G_2} \colonequals \left\langle x, y : x^{p^2} = y^{p^2} = 1, yxy^{-1} = x^{p^2-p+1} \right\rangle \cong \Z_{p^2} \rtimes_{p^2-p+1} \Z_{p^2}.\] Define \[\widehat{P_i} \colonequals \left(\bigcup_{j=0}^{p-1} \left(\left\langle xy^{j+pi} \right\rangle - \left\langle x^p y^{pj} \right\rangle\right)\right) \cup \left(\left\langle x^{pi}y \right\rangle - \left\langle y^p \right\rangle\right) \] and $ \widehat{S_j} \colonequals \left\langle xy^j \right\rangle - \{ 1 \}$  for $0 \le j \le p-1$,  $\widehat{S_{\infty} } \colonequals \left\langle y \right\rangle - \{ 1 \}$, and finally\[\widehat{D_2}\colonequals  \left(\bigcup_{j=1}^{\frac{p-1}{2}} \widehat{P_i} \right) \cup \left(\bigcup_{k=0}^{\frac{p-1}{2}} \widehat{S_k}\right).\] We note that the formal sets $P_i$ and $\widehat{P_i}$ appear the same, but the element $(xy^{p+1})^2 = x^2y^{2p+2} \in P_1$ whereas $(xy^{p+1})^2  = x^{p^2+2}y^2 \in \widehat{P_1}$. 
 
 In order to demonstrate that $\widehat{P_i}$ is a PDS, we first prove two lemmas that we will use in the proof of the result.
 
 \begin{lem}
 \label{internaldifferences}
 Let $1 \le i \le (p-1)$. For the group $\widehat{G_2}$ and the subset $\widehat{P_i}$ defined above, we have \[ \sum_{j=0}^{p-1} \left((p^2-2p) \left\langle xy^{ip+j} \right\rangle + p \left\langle x^p y^{pj} \right\rangle\right) + (p^2-2p) \left\langle x^{ip} y \right\rangle + p \left\langle y^p \right\rangle\] 
 \[= (p^2-p)(p+1) 1_{\widehat{G_2}} + (p^2-2p) \widehat{P_i} + (p^2-p) \left\langle x^p,y^p \right\rangle.\]
 \end{lem}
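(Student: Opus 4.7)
The plan is to verify this group-ring identity by direct expansion and regrouping. The key structural input is that each $C_j \colonequals \langle xy^{ip+j}\rangle$ is cyclic of order $p^2$ whose unique subgroup of order $p$ equals $K_j \colonequals \langle x^p y^{pj}\rangle$; analogously, $C_\infty \colonequals \langle x^{ip}y\rangle$ is cyclic of order $p^2$ with unique order-$p$ subgroup $K_\infty \colonequals \langle y^p\rangle$. First I would verify that $\langle x^p, y^p\rangle$ is a central elementary abelian subgroup of order $p^2$: the defining relation $yxy^{-1} = x^{p^2 - p + 1} \equiv x^{1-p} \pmod{p^2}$ directly gives $yx^p y^{-1} = x^{p(1-p)} = x^p$, and the binomial expansion shows $(1-p)^p \equiv 1 \pmod{p^2}$, so $y^p$ commutes with $x$. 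The standard $p$-th power formula in such a group then yields $(xy^{ip+j})^p = x^p y^{pj}$, confirming $K_j \subset C_j$, with the analogous statement for the infinity term.

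Because each $C_j$ is cyclic (hence abelian), the elementary group-ring identities $C_j C_j = p^2 C_j$, $C_j K_j = K_j C_j = p C_j$, and $K_j K_j = p K_j$ all hold, and combine to give
$$(C_j - K_j)(C_j - K_j) = (p^2 - 2p)\, C_j + p\, K_j,$$
with the analogous formula at infinity. Consequently, the left-hand side of the lemma is exactly $\sum_{j=0}^{p-1}(C_j - K_j)^2 + (C_\infty - K_\infty)^2$. Substituting $C_j = (C_j - K_j) + K_j$ and recognizing $\widehat{P_i} = \sum_{j=0}^{p-1}(C_j - K_j) + (C_\infty - K_\infty)$ collapses this to
$$(p^2 - 2p)\,\widehat{P_i} \;+\; (p^2 - p)\Bigl(\sum_{j=0}^{p-1} K_j + K_\infty\Bigr).$$

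To finish, I would evaluate the central subgroup sum $\sum_j K_j + K_\infty$. These are exactly the $p+1$ distinct one-dimensional subgroups of the central elementary abelian group $\langle x^p, y^p\rangle \cong \F_p^2$. Since every nonidentity element of $\langle x^p, y^p\rangle$ lies in a unique such subgroup while the identity lies in all $p+1$ of them, this sum equals $(p+1)\,1_{\widehat{G_2}} + (\langle x^p, y^p\rangle - 1_{\widehat{G_2}})$; substituting back and combining the $1_{\widehat{G_2}}$ and $\langle x^p, y^p\rangle$ contributions separately yields the stated identity. I expect the main obstacle to be the careful coefficient bookkeeping in this last step, together with the preliminary verification that $\widehat{P_i}$ is a genuine disjoint union of the sets $C_j - K_j$ and $C_\infty - K_\infty$, which reduces to showing that any two distinct $C_j$'s intersect only inside the common central subgroup $\langle x^p, y^p\rangle$.
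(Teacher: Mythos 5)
Your proposal follows essentially the same route as the paper's proof, which is a one-sentence multiplicity count: every order-$p^2$ element of $\widehat{P_i}$ lies in exactly one of the cyclic subgroups $\left\langle xy^{ip+j}\right\rangle$ (or $\left\langle x^{ip}y\right\rangle$) and so picks up coefficient $p^2-2p$, while elements of $\left\langle x^p,y^p\right\rangle$ pick up an additional $p$. Your identities $(C_j-K_j)^2=(p^2-2p)C_j+pK_j$, the substitution $C_j=(C_j-K_j)+K_j$, and the observation that $K_0,\dots,K_{p-1},K_\infty$ are the $p+1$ distinct order-$p$ subgroups of the central subgroup $\left\langle x^p,y^p\right\rangle$ carry out exactly that count, with the structural facts ($(xy^k)^p=x^py^{pk}$, centrality of $\left\langle x^p,y^p\right\rangle$, disjointness of the sets $C_j-K_j$) verified rather than assumed; that part is sound and more careful than the paper.

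The one genuine issue is precisely the ``coefficient bookkeeping'' you defer at the end: carried out, it does not give the identity as printed. From your penultimate expression $(p^2-2p)\widehat{P_i}+(p^2-p)\bigl(\sum_{j}K_j+K_\infty\bigr)$ together with $\sum_j K_j+K_\infty=(p+1)1_{\widehat{G_2}}+\bigl(\left\langle x^p,y^p\right\rangle-1_{\widehat{G_2}}\bigr)$, you obtain $(p^2-p)p\,1_{\widehat{G_2}}+(p^2-2p)\widehat{P_i}+(p^2-p)\left\langle x^p,y^p\right\rangle$, so the coefficient of $1_{\widehat{G_2}}$ is $p^3-p^2$, not $(p^2-p)(p+1)$. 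A direct check confirms this: on the left side the identity appears with coefficient $(p+1)(p^2-2p)+(p+1)p=(p+1)(p^2-p)$, whereas the right side as printed gives $(p+1)(p^2-p)$ plus a further $p^2-p$ coming from the identity inside $\left\langle x^p,y^p\right\rangle$. So your computation is correct, but it establishes a corrected version of the lemma (constant term $p^3-p^2$, equivalently final term $(p^2-p)\bigl(\left\langle x^p,y^p\right\rangle-1_{\widehat{G_2}}\bigr)$) rather than the stated one; the statement in the paper overcounts the identity by $p^2-p$, and the corrected constant is in fact what the subsequent PDS verification requires, since $k-\mu=(p^3-p)-(p^2-p)=p^3-p^2$. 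Asserting that the substitution ``yields the stated identity'' glosses over exactly the check the lemma exists to record, so you should either flag the discrepancy or state and prove the corrected identity.
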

 
 \begin{proof}
 All of the elements of $\widehat{P_i}$ of order $p^2$ will appear $(p^2-2p)$ times, and all of the elements of $\left\langle x^p, y^p \right\rangle$ will appear an additional $p$ times.
 \end{proof}
 
 \begin{lem}
 \label{externaldifferences}
 For $j, j' \in \{ 0,1,\ldots, p-1 \}$ we have 
 \[\sum_{j \neq j'} \left(\left\langle xy^{ip+j} \right\rangle - \left\langle x^p y^{pj} \right\rangle\right) \left(\left\langle xy^{ip+j'} \right\rangle - \left\langle x^p y^{pj'} \right\rangle\right) + \sum_{j=0}^{p-1} \left(\left\langle xy^{ip+j} \right\rangle - \left\langle x^p y^{pj} \right\rangle\right) \left(\left\langle x^{ip+j}y \right\rangle - \left\langle x^{pj} y^{p} \right\rangle\right)\] 
 \[= (p^2-p)\widehat{G_2} - (p^2-p) \left\langle x^p, y^p \right\rangle.\]
 \end{lem}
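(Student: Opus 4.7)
The plan is to prove the identity by direct group-ring expansion, exploiting the fact that $K := \langle x^p, y^p \rangle$ is a central subgroup of $\widehat{G_2}$ of order $p^2$. Centrality is verified via $yx^p y^{-1} = (yxy^{-1})^p = x^{(p^2-p+1)p} \equiv x^p \pmod{p^2}$, and symmetrically $xy^p = y^p x$. Consequently $\widehat{G_2}/K \cong \Z_p \times \Z_p$, and the $p+1$ subgroups of order $p$ in this quotient lift to subgroups $H_0, H_1, \ldots, H_{p-1}, H_\infty \subseteq \widehat{G_2}$ of order $p^3$, pairwise intersecting in $K$ and together satisfying $\sum_{k=0}^{p-1} H_k + H_\infty = \widehat{G_2} + pK$ as elements of $\Z[\widehat{G_2}]$.

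Next I would record the key structural data. The cyclic subgroup $A_j := \langle xy^{ip+j} \rangle$ has order $p^2$, maps to $\langle (1,j) \rangle$ in the quotient, so lies in $H_j$, and contains the unique order-$p$ subgroup $B_j := \langle x^p y^{pj} \rangle \subseteq K$; similarly, $A'_j := \langle x^{ip+j}y \rangle$ has image $\langle(j, 1)\rangle$ and order-$p$ subgroup $B'_j := \langle x^{pj}y^p \rangle$. For $j \neq j'$ in $\{0,\ldots,p-1\}$, one checks $A_j \cap A_{j'} = B_j \cap B_{j'} = \{1\}$ by combining the quotient map with the fact that $B_j$ and $B_{j'}$ are distinct 1-dimensional subspaces of $K \cong \Z_p^2$.

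With this setup I would apply the general identity $U \cdot V = |U \cap V| \cdot \overline{UV}$ in the group ring (where $\overline{UV}$ is the sum over distinct elements of the set product $UV$, and $UV$ is itself a subgroup when one factor is central) to each of the four terms in the expansion of $(A_j - B_j)(A_{j'} - B_{j'})$ and of $(A_j - B_j)(A'_j - B'_j)$. For $j \neq j'$ this yields $A_j A_{j'} = \widehat{G_2}$, $A_j B_{j'} = B_{j'} A_j = H_j$, $B_j A_{j'} = H_{j'}$, $B_j B_{j'} = K$, and hence $(A_j - B_j)(A_{j'} - B_{j'}) = \widehat{G_2} - H_j - H_{j'} + K$; an analogous formula handles the ``generic'' terms of the second sum. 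Summing these contributions over the specified index sets and invoking the decomposition of $\widehat{G_2}$ in terms of the $H_k$ should collapse the double sums to the target $(p^2 - p)(\widehat{G_2} - K)$.

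The main obstacle is handling the exceptional indices $j \in \{1, p-1\}$ in the second sum, where $B_j = B'_j$ (since $\langle(1, j)_K\rangle = \langle(j, 1)_K\rangle$ exactly when $j^2 \equiv 1 \pmod{p}$). At these indices $A_j \cap A'_j$ is no longer trivial, so the product $(A_j - B_j)(A'_j - B'_j)$ picks up extra terms involving $A_j$, $A'_j$, and a larger multiplicity on $H_j$; in fact, for a specific value of $i$ one may even have $A_j = A'_j$, forcing a separate self-product computation. Verifying that these special contributions combine with the generic ones — using the pairing $j \leftrightarrow j^{-1}$ in $\{1,\ldots,p-1\}$ to match up partner terms — to yield exactly $(p^2 - p)(\widehat{G_2} - K)$ is the crux of the argument, and will require careful case analysis and bookkeeping.
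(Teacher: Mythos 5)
Your structural setup (centrality of $K = \langle x^p, y^p\rangle$, the quotient $\widehat{G_2}/K \cong \Z_p\times\Z_p$, the identification of each piece as a cyclic subgroup of order $p^2$ lying over a line of the quotient minus its order-$p$ subgroup, and the group-ring identity $U\cdot V = |U\cap V|\,\overline{UV}$) is exactly the right machinery, and your generic computation $(A_j-B_j)(A_{j'}-B_{j'}) = \widehat{G_2}-H_j-H_{j'}+K$ is correct; since the paper's own proof is only a two-sentence symmetry-and-counting sketch, an explicit expansion like yours is the appropriate way to make it rigorous. However, the difficulty you flag at the indices with $j^2\equiv 1\pmod p$ is not a bookkeeping problem that case analysis will resolve: the identity as literally printed is false. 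Comparing augmentations, each product of two pieces contributes $(p^2-p)^2$ to the coefficient sum, and the left side has $p(p-1)+p=p^2$ such products, giving $p^4(p-1)^2$, whereas the right side has augmentation $(p^2-p)(p^4-p^2)=p^3(p+1)(p-1)^2$. These differ by exactly $p\,(p^2-p)^2$, so no recombination of your exceptional terms $pH_j-pA_j-pA'_j+pB_j$ (which in any case contain summands $p\langle xy^{ip+j}\rangle$ with nothing available to cancel them) can produce the stated right-hand side.

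The resolution is that the second sum is mis-transcribed. In the expansion of $\widehat{P_i}^2$ this lemma is meant to collect all cross-terms between distinct pieces of $\widehat{P_i}$, and $\widehat{P_i}$ has exactly $p+1$ pieces: the $p$ sets $X_j=\langle xy^{ip+j}\rangle-\langle x^py^{pj}\rangle$ together with the single fixed set $Y=\langle x^{ip}y\rangle-\langle y^p\rangle$. The second sum should therefore be $\sum_{j=0}^{p-1}\left(X_jY+YX_j\right)$, contributing $2p$ products, rather than the $p$ products $\sum_j X_jZ_j$ with a $j$-dependent second factor $Z_j=\langle x^{ip+j}y\rangle-\langle x^{pj}y^p\rangle$. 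With that reading, every one of the $p(p+1)$ ordered cross-terms pairs two cyclic subgroups lying over \emph{distinct} lines of $\widehat{G_2}/K$, so all the relevant intersections are trivial, your generic formula applies uniformly with no exceptional indices, and summing it over all ordered pairs of the $p+1$ lines using $\sum_{L}H_L=\widehat{G_2}+pK$ immediately yields $(p^2-p)\widehat{G_2}-(p^2-p)K$. I recommend you prove that corrected statement; your proposed argument, with the exceptional-case analysis deleted, does so completely and is more convincing than the symmetry appeal in the paper.
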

 
 \begin{proof}
 A symmetry argument implies that all of the elements of order $p^2$ appear the same number of times in this sum, and the fact that these are different values of $j, j'$ implies that we will not get any elements of order $p$. The result follows from a counting argument.
 \end{proof}
 
 \begin{thm}
 \label{nonabelianPaley}
Let $p$ be a prime. The collection $\{ \widehat{P_1}, \widehat{P_2},\dots,\widehat{P_{p-1}},\widehat{S_0},\widehat{S_1},..,\widehat{S_{p-1}},\widehat{S_{\infty}} \} $ is a $2p$-class amorphic association scheme on $\widehat{G_2}$ and the set $\widehat{D_2}$ is a Paley-type $\left(p^4, \frac{p^4-1}{2}, \frac{p^4-5}{4}, \frac{p^4-1}{4}\right)$-PDS in $G_2$.
 \end{thm}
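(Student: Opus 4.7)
The plan is to apply Corollary \ref{AmorphicPDS} to the collection $\{\widehat{P_1},\dots,\widehat{P_{p-1}},\widehat{S_0},\dots,\widehat{S_{p-1}},\widehat{S_\infty}\}$, after first establishing that it is a partition of $\widehat{G_2} - \{1\}$ into $2p$ PDSs all of Latin square type with common parameter $n=p^2$. Theorem \ref{VDtheorem} will then deliver the $2p$-class amorphic association scheme, and Corollary \ref{AmorphicPDS} will force the fusion $\widehat{D_2}$ to be a PDS of Latin square type, whose parameters a direct size count will identify as Paley-type.

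For each $\widehat{S_j}$, the first task is to check that $\langle xy^j\rangle$ (and $\langle y\rangle$ when $j=\infty$) is cyclic of order $p^2$ in $\widehat{G_2}$. Using $yxy^{-1}=x^{1-p}$ and the $p$-adic expansion $(1-p)^r\equiv 1-rp\pmod{p^2}$, valid for odd $p$, one computes $(xy^j)^p=x^p y^{pj}\neq 1$ for $0\le j\le p-1$ and $(xy^j)^{p^2}=1$. Thus each $\widehat{S_j}$ is a cyclic subgroup of order $p^2$ minus the identity, a trivial Latin square $(p^4,p^2-1,p^2-2,0)$-PDS with $r=1$. These $p+1$ cyclic subgroups pairwise intersect trivially, since the cycle $\langle (xy^j)^p\rangle=\langle x^p y^{pj}\rangle$ runs over all $p+1$ distinct lines of $\langle x^p,y^p\rangle\cong\Z_p^2$.

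For each $\widehat{P_i}$, write $\widehat{P_i}=\sum_{j=0}^{p-1}A_j+B$, where $A_j=\langle xy^{ip+j}\rangle-\langle x^p y^{pj}\rangle$ and $B=\langle x^{pi}y\rangle-\langle y^p\rangle$. Each piece is inverse-closed, so $\widehat{P_i}^{(-1)}=\widehat{P_i}$, and $\widehat{P_i}^2$ splits into internal squares $\sum_j A_j^2+B^2$, evaluated by Lemma \ref{internaldifferences}, plus external cross terms, evaluated by Lemma \ref{externaldifferences}. Adding the two outputs causes the $(p^2-p)\langle x^p,y^p\rangle$ contributions to cancel, leaving
\[\widehat{P_i}^2=(p^2-p)(p+1)\cdot 1_{\widehat{G_2}}+(p^2-2p)\widehat{P_i}+(p^2-p)\widehat{G_2},\]
which is the PDS equation for $(p^4,p(p^2-1),2p^2-3p,p^2-p)$ parameters, Latin square type with $n=p^2$ and $r=p$. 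Since the order-$p^2$ elements of $\widehat{P_i}$ lie in cyclic subgroups distinct from those used for the $\widehat{S_j}$'s and for each $\widehat{P_{i'}}$, $i\neq i'$, the sets are pairwise disjoint; and the count $(p-1)\,p(p^2-1)+(p+1)(p^2-1)=(p^2-1)(p^2+1)=p^4-1$ confirms that the $2p$ sets exhaust $\widehat{G_2}-\{1\}$.

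Theorem \ref{VDtheorem} now yields the $2p$-class amorphic association scheme, and Corollary \ref{AmorphicPDS} delivers $\widehat{D_2}$ as a PDS. Its size
\[|\widehat{D_2}|=\tfrac{p-1}{2}\,p(p^2-1)+\tfrac{p+1}{2}(p^2-1)=\tfrac{(p^2-1)(p^2+1)}{2}=\tfrac{p^4-1}{2}\]
corresponds to Latin square type with $r=(p^2+1)/2$, giving $\lambda=n+r^2-3r=(p^4-5)/4$ and $\mu=r^2-r=(p^4-1)/4$, exactly the Paley-type parameters. The main obstacle is the external-differences computation: the non-abelian twist $y x y^{-1}=x^{1-p}$ deforms each cross product $A_jA_{j'}$ and $A_jB$ away from its abelian counterpart, and the essential insight, to be captured in Lemma \ref{externaldifferences}, is that these distortions affect only the distribution on $\langle x^p,y^p\rangle$, where they ultimately cancel against the excess from Lemma \ref{internaldifferences}.
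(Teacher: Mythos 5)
Your proposal is correct and takes essentially the same route as the paper: show the $\widehat{S_j}$ are trivial Latin square type PDSs, compute $\widehat{P_i}^2$ by combining Lemma \ref{internaldifferences} with Lemma \ref{externaldifferences} so that the $(p^2-p)\left\langle x^p,y^p\right\rangle$ terms cancel, and then invoke Corollary \ref{AmorphicPDS} to obtain the $2p$-class amorphic scheme and the Paley-type fusion $\widehat{D_2}$; the order, disjointness, and counting checks you add are details the paper leaves implicit. One small note: the identity coefficient $(p^2-p)(p+1)$ in your displayed equation reproduces the slip already present in Lemma \ref{internaldifferences} (the standalone identity term should be $p(p^2-p)=p^3-p^2$, matching $k-\mu$ once the identity inside $(p^2-p)\left\langle x^p,y^p\right\rangle$ is accounted for), which affects neither your argument nor the stated parameters.
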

 
 \begin{proof}
 Since the $\widehat{S_i}$ are all subgroups, they are all (trivial) Latin-square type PDSs, and Lemmas~\ref{internaldifferences} and~\ref{externaldifferences} imply the following.
 
 \begin{align*} 
 \widehat{P_i}^2  &= &&\sum_{j=0}^{p-1} \left(\left\langle xy^{ip+j} \right\rangle - \left\langle x^p y^{pj} \right\rangle\right)^2 + \left(\left\langle x^py \right\rangle - \left\langle y^p \right\rangle\right)^2\\
 & &&+ \sum_{j \neq j'} \left(\left\langle xy^{ip+j} \right\rangle - \left\langle x^p y^{pj} \right\rangle\right) \left(\left\langle xy^{ip+j'} \right\rangle - \left\langle x^p y^{pj'} \right\rangle\right)\\
 & &&+ \sum_{j=0}^{p-1} \left(\left\langle xy^{ip+j} \right\rangle - \left\langle x^p y^{pj} \right\rangle\right) \left(\left\langle x^{ip+j}y \right\rangle - \left\langle x^{pj} y^{p} \right\rangle\right) \\
 &= &&(p^2-p)(p+1) 1_{\widehat{G_2}} + (p^2-2p) \widehat{P_i} + (p^2-p) \left\langle x^p,y^p \right\rangle + (p^2-p)\widehat{G_2} - (p^2-p) \left\langle x^p, y^p \right\rangle \\
 &= &&(p^3-p) 1_{\widehat{G_2}} +  (p^2-2p) \widehat{P_i} + (p^2-p) \widehat{G_2} \\
 &= &&(p^3-p) 1_{\widehat{G_2}} +  (2p^2-3p) \widehat{P_i} + (p^2-p) (\widehat{G_2}- \widehat{P_i} - 1_{\widehat{G_2}}) \\
 \end{align*}
Thus, the $\widehat{P_i}$ are all $(p^4, p^3-p, 2p^2-3p, p^2-p)$-PDSs in $\widehat{G_2}$ as claimed. Since these are all Latin-square type PDSs, Corollary~\ref{AmorphicPDS} implies that any union of these PDSs will be a PDS. In particular, \[\widehat{D_2} = \left(\bigcup_{j=1}^{\frac{p-1}{2}} \widehat{P_i}\right) \cup \left(\bigcup_{k=0}^{\frac{p-1}{2}} \widehat{S_i}\right)\] is a $\left(p^4, \frac{p^4-1}{2}, \frac{p^4-5}{4}, \frac{p^4-1}{4}\right)$-PDS as required. \end{proof}

We now turn to a generalization of this construction. Let \[G_t = \left\langle x, y : x^{p^t} = y^{p^t} = 1, xy = yx \right\rangle \cong \Z_{p^{t}} \times \Z_{p^{t}}.\] Polhill~\cite{Polhill_2002} showed that the sets $P_{t,i}$ are  Latin-square type PDSs with parameters \[\left(p^{2t}, \frac{p^{t}-p}{p-1} (p^t-1), p^t + (\frac{p^t-p}{p-1})^2 - 3 \frac{p^t-p}{p-1},(\frac{p^t-p}{p-1})^2 - 3 \frac{p^t-p}{p-1}\right)\] in $G_t$ for $1 \le i \le p-1$, where $P_{t,i}$ is defined to be  

\[\bigcup_{r=1}^{t-1} \left(\bigcup_{j=0}^{p^{r-1}-1} \left(\bigcup_{k=0}^{p-1} \left(\left\langle xy^{ip^r + pj + k} \right\rangle - \left\langle x^{p^{t-r}} y^{jp^{t+1-r}  + kp^{t-r}} \right\rangle\right)\right) \cup \left(\left\langle x^{ip^r + jp} y \right\rangle - \left\langle x^{jp^{t-r+1}} y^{p^{t-r}} \right\rangle\right)\right).
\]

\noindent If we define $S_{t,j}\colonequals \left\langle xy^j \right\rangle - \{ 1_{G_t} \}, 0 \le j \le p-1, $ and $S_{t,\infty}: = \left\langle y \right\rangle - \{ 1_{G_t} \}$, then we get the following theorem, which is analogous to Theorem~\ref{semidirectamorphicscheme}.

\begin{thm}
\label{semidirectamorphicscheme_t}
For $t \ge 2$, the collection $\{ P_{t,1}, P_{t,2}, \ldots ,P_{t,p-1},S_{t,0},S_{t,1}, \ldots ,S_{t,p-1},S_{t,\infty} \}$ is a $2p$-class amorphic association scheme on $G_t$.
 \end{thm}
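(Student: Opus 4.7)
The plan is to apply Theorem~\ref{VDtheorem} (van Dam's theorem) to the Cayley graphs arising from the proposed collection of subsets of $G_t$. Two things need to be verified: (i) each set in the collection yields a strongly regular Cayley graph of Latin-square type, and (ii) these Cayley graphs edge-decompose the complete graph on $G_t$. For (i), each $P_{t,i}$ with $1 \le i \le p-1$ is already a Latin-square type PDS by Polhill's cited result~\cite{Polhill_2002}. Each $S_{t,j}$ with $j \in \{0, 1, \ldots, p-1, \infty\}$ has the form $H - \{1_{G_t}\}$ for a cyclic subgroup $H$ of order $p^t$, so $\Cay(G_t, S_{t,j})$ is a disjoint union of $p^t$ copies of $K_{p^t}$, which is strongly regular with parameters $(p^{2t}, p^t - 1, p^t - 2, 0)$. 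Setting $n = p^t$ and $r = 1$ in the Latin-square type parameters $(n^2, r(n-1), n + r^2 - 3r, r^2 - r)$ recovers exactly these values.

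For (ii), this is equivalent to showing that the collection partitions $G_t \setminus \{1_{G_t}\}$. A direct size count gives
\[
(p-1)\cdot \frac{(p^t - p)(p^t - 1)}{p-1} + (p+1)(p^t-1) = (p^t-1)((p^t - p) + (p+1)) = p^{2t} - 1,
\]
matching $|G_t| - 1$, so it suffices to verify pairwise disjointness. The $p+1$ subgroups $\langle xy^0\rangle, \ldots, \langle xy^{p-1}\rangle, \langle y\rangle$ intersect pairwise in $\{1_{G_t}\}$, because their images in $G_t/pG_t \cong \Z_p \times \Z_p$ give the $p+1$ distinct cyclic subgroups of order $p$; hence the $S_{t,j}$ are pairwise disjoint. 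For the remaining disjointness checks, the key observation is that every generator $xy^{ip^r + pj + k}$ (and analogously $x^{ip^r + jp}y$) appearing in $P_{t,i}$ with $1 \le i \le p-1$ satisfies $ip^r + pj + k \not\equiv j' \pmod{p^t}$ for any $j' \in \{0, \ldots, p-1\}$ (since $ip^r \ge p$ once $r \ge 1$ and $i \ge 1$), forcing $\langle xy^{ip^r+pj+k}\rangle \neq \langle xy^{j'}\rangle$, while the subtracted subgroups of the form $\langle x^{p^{t-r}} y^{jp^{t+1-r} + kp^{t-r}}\rangle \subseteq pG_t$ in the definition of $P_{t,i}$ precisely remove any elements that would lie in $pG_t$ and potentially overlap with smaller-order elements of $\langle xy^{j'}\rangle$.

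Once (i) and (ii) are in hand, Theorem~\ref{VDtheorem} applies immediately: a decomposition of the complete graph on $G_t$ into $2p$ strongly regular graphs, all of Latin-square type, is automatically a $2p$-class amorphic association scheme, which is the claim.

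The main obstacle is the disjointness verification in the second paragraph. The $P_{t,i}$ are defined by a recursive double union indexed by $r$ and $(j, k)$ that mixes cyclic subgroups at multiple levels together with subtracted smaller subgroups, so systematically ruling out collisions both among distinct $(r, j, k)$-indexed pieces within a single $P_{t,i}$ and across distinct values of $i$ requires careful bookkeeping. This parallels the element-level analysis already carried out in Polhill's proof~\cite{Polhill_2002}, and the inductive structure there (in particular the stratification of elements by $p$-adic valuation of their coordinates) is what allows the argument to extend from the $t=2$ case (Theorem~\ref{semidirectamorphicscheme}) to arbitrary $t \ge 2$.
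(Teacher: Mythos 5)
Your proposal is correct and matches the argument the paper intends: the paper states Theorem~\ref{semidirectamorphicscheme_t} without an explicit proof, relying on Polhill's result~\cite{Polhill_2002} that the $P_{t,i}$ are Latin square type PDSs, the observation that the $S_{t,j}$ are trivial Latin square type PDSs partitioning the rest of $G_t - \{1_{G_t}\}$, and an application of Theorem~\ref{VDtheorem} (via Corollary~\ref{AmorphicPDS}), which is exactly your route. Your additional cardinality count and disjointness discussion only make explicit what the paper leaves implicit (and defers to~\cite{Polhill_2002}).
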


The combination of Corollary~\ref{AmorphicPDS} and Theorem~\ref{semidirectamorphicscheme_t} imply that \[D_t\colonequals\displaystyle \left(\bigcup_{i=1}^{\frac{p-1}{2}} P_{t,i}\right) \cup \left(\bigcup_{j=0}^{\frac{p-1}{2}} S_{t,j}\right)\] is a Paley-type $\left(p^{2t}, \frac{p^{2t}-1}{2}, \frac{p^{2t}-5}{4}, \frac{p^{2t}-1}{4}\right)$-PDS in $G_t$.

We now construct Paley-type PDSs in the nonabelian group 
\[\widehat{G_t} \colonequals \left\langle x, y : x^{p^t} = y^{p^t} = 1, yxy^{-1} = x^{(p-1)p^{t-1} + 1} \right\rangle \cong \Z_{p^t} \rtimes_{(p-1)p^{t-1} + 1} \Z_{p^t}.\] To do this, we define a collection of disjoint PDSs that partition the nonidentity elements of $\widehat{G_t}$ in an analogous fashion as those defined in $G_t$: first, we define $\widehat{P_{t,i}}$ to be 
\[\bigcup_{r=1}^{t-1} \left(\bigcup_{j=0}^{p^{r-1}-1} \left(\bigcup_{k=0}^{p-1} \left(\left\langle xy^{ip^r + pj + k} \right\rangle - \left\langle x^{p^{t-r}} y^{jp^{t+1-r}  + kp^{t-r}} \right\rangle\right)\right) \cup \left(\left\langle x^{ip^r + jp} y \right\rangle - \left\langle x^{jp^{t-r+1}} y^{p^{t-r}} \right\rangle\right)\right);
\]
then we define 
\begin{eqnarray*}
\widehat{S_{t,j}} & \colonequals & \left\langle xy^j \right\rangle - \{ 1_{\widehat{G_t}} \}, \\
\widehat{S_{t,\infty}} & \colonequals & \left\langle y \right\rangle - \{ 1_{\widehat{G_t}} \}. \\
\end{eqnarray*}

The main construction in this section is the following, and along with those examples in the previous section are the first examples of Paley-type PDSs in nonabelian groups known to the authors.
\begin{thm}
\label{mainPaleyconstr}
For $t \ge 2$, the collection $\{ \widehat{P_{t,1}}, \widehat{P_{t,2}}, \ldots ,\widehat{P_{t,p-1}},\widehat{S_{t,0}},\widehat{S_{t,1}}, \ldots ,\widehat{S_{t,p-1}},\widehat{S_{t,\infty}} \}$ is a $2p$-class amorphic association scheme on $\widehat{G_t}$. Therefore, 
\[\widehat{D_t}\colonequals \left(\bigcup_{i=1}^{\frac{p-1}{2}} \widehat{P_{t,i}}\right) \cup \left(\bigcup_{j=0}^{\frac{p-1}{2}} \widehat{S_{t,j}}\right)\] is a Paley-type $\left(p^{2t}, \frac{p^{2t}-1}{2}, \frac{p^{2t}-5}{4}, \frac{p^{2t}-1}{4}\right)$-PDS in $\widehat{G_t}$.
 \end{thm}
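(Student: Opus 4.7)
The plan is to mirror the proof of Theorem~\ref{nonabelianPaley}, extending the group ring calculation from $t = 2$ to general $t \ge 2$. The essential structural fact driving everything is that $\widehat{G_t}$ is a ``small twist'' of the abelian group $G_t$: a direct binomial expansion (using $t \ge 2$) shows that $[y,x] = x^{(p-1)p^{t-1}}$ has order $p$, so $[\widehat{G_t}, \widehat{G_t}] = \langle x^{p^{t-1}}\rangle$ is central of order exactly $p$, and moreover $\langle x^p, y^p\rangle \cong \Z_{p^{t-1}}^2$ is a central subgroup of index $p^2$ in $\widehat{G_t}$. These two facts are what allow almost every abelian group ring identity to be ported to $\widehat{G_t}$ with at most a controlled correction.

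After disposing of the trivial $(p^{2t}, p^t-1, p^t-2, 0)$-PDSs $\widehat{S_{t,j}}$ and $\widehat{S_{t,\infty}}$ (each a cyclic subgroup of order $p^t$ minus the identity), the main work is to show that every $\widehat{P_{t,i}}$ is a Latin-square type PDS in $\widehat{G_t}$ with Polhill's parameters. I would do this by computing $\widehat{P_{t,i}}^2$ in $\Z[\widehat{G_t}]$ and decomposing it into ``internal squares'' (the squares of the individual summands $\langle xy^{ip^r + pj + k}\rangle - \langle x^{p^{t-r}} y^{jp^{t+1-r} + kp^{t-r}}\rangle$ together with the analogous final term) and ``cross terms'' between distinct index triples $(r,j,k)$. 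These are the direct analogues of Lemmas~\ref{internaldifferences} and~\ref{externaldifferences}: the internal side accounts for each element of order $p^t$ with the right multiplicity plus extra contributions lying in the central subgroup $\langle x^p, y^p\rangle$, and the external side distributes evenly across $\widehat{G_t} - \langle x^p, y^p\rangle$ by a symmetry argument. The reason the same formal identities survive is that the small subgroups $\langle x^{p^{t-r}} y^\cdot \rangle$ (with $r \ge 1$) lie entirely inside the central subgroup $\langle x^p, y^p \rangle$ and therefore coincide exactly between $G_t$ and $\widehat{G_t}$, while any discrepancy among the large cyclic subgroups $\langle xy^\cdot \rangle$ is absorbed by the order-$p$ central subgroup $\langle x^{p^{t-1}}\rangle$ and cancels when all terms are combined.

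Once each $\widehat{P_{t,i}}$ is established as a Latin-square type PDS, Corollary~\ref{AmorphicPDS} immediately provides the $2p$-class amorphic association scheme structure: the listed collection partitions the nonidentity elements of $\widehat{G_t}$ into Latin-square type PDSs, and any fusion is therefore a PDS of the same type. The Paley-type conclusion for $\widehat{D_t}$ then follows by a size count, since $\widehat{D_t}$ contains $(p-1)/2$ of the $\widehat{P_{t,i}}$'s (each of size $\tfrac{p^t - p}{p-1}(p^t - 1)$) and $(p+1)/2$ of the $\widehat{S_{t,\cdot}}$'s (each of size $p^t - 1$), which totals $\tfrac{(p^t - 1)(p^t + 1)}{2} = \tfrac{p^{2t} - 1}{2}$.

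The hardest step is the general-$t$ analogue of Lemma~\ref{externaldifferences}: in the $t = 2$ case a single symmetry argument suffices, but for general $t$ the cross terms are indexed by pairs of distinct triples $(r, j, k)$ and $(r', j', k')$ and I must show that they still spread uniformly over $\widehat{G_t} - \langle x^p, y^p\rangle$ without accidental concentration caused by the nonabelian commutation. The centrality of $\langle x^p, y^p\rangle$ and the fact that the commutator subgroup has order only $p$ should make this tractable, but the combinatorial bookkeeping across the nested indices is the real technical hurdle.
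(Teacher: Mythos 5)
Your proposal is correct and follows essentially the same route as the paper, which proves the result by the same reasoning as Theorem~\ref{nonabelianPaley}: establish that each $\widehat{P_{t,i}}$ is a Latin square type PDS via group ring analogues of Lemmas~\ref{internaldifferences} and~\ref{externaldifferences} (using that $\left\langle x^p, y^p\right\rangle$ is central so the abelian identities carry over), then invoke Corollary~\ref{AmorphicPDS} and count. Your added structural observations about the commutator subgroup and the size computation for $\widehat{D_t}$ are consistent with, and a reasonable elaboration of, the argument the paper leaves to the reader.
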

 
 The proof uses the same reasoning as the proof of Theorem~\ref{nonabelianPaley} and is left for the reader.


\section{Paley-type PDSs and Paley-Hadamard DSs in nonabelian groups}
\label{sec:Paley}

In this section, we will use results from the previous sections to construct additional examples of nonabelian Paley-type PDSs as well as nonabelian Stanton-Sprott (Twin prime power) Paley-Hadamard DSs. Davis~\cite{Davis_1994} used character theory to prove a product construction for abelian groups; we will show that the theorem remains true for nonabelian groups. The theorem will enable us to recursively build nonabelian PDSs with Paley-type parameters.

\begin{thm}
\label{generalproduct}
Suppose that the groups $G$ and $G'$ of order $v$ both possess PDSs of the Paley-type having parameters $\left(v,\frac{v-1}{2},\frac{v-5}{4},\frac{v-1}{4}\right)$, $D$ and $D'$ respectively.  Then,the group ${\cal G} \colonequals G \times G'$ also contains a Paley-type PDS with parameters $\left(v^2,\frac{v^2-1}{2},\frac{v^2-5}{4}.\frac{v^2-1}{4}\right)$.
\end{thm}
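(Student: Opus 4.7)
The plan is to adapt the Davis abelian product construction \cite{Davis_1994} to the nonabelian setting, relying on the fact that the key group ring identities for Paley-type PDSs hold in $\mathbb{Z}[G]$ and $\mathbb{Z}[G']$ regardless of whether these groups are abelian. The candidate set I would take is
\[ E \colonequals (D \times \{1_{G'}\}) \cup (\{1_G\} \times D') \cup (D \times D') \cup (\overline{D} \times \overline{D'}), \]
where $\overline{D} \colonequals G \setminus (D \cup \{1_G\})$ and $\overline{D'} \colonequals G' \setminus (D' \cup \{1_{G'}\})$. These four pieces are pairwise disjoint, a direct count gives $|E| = (v-1) + 2 \cdot \frac{(v-1)^2}{4} = \frac{v^2-1}{2}$, the identity $(1_G, 1_{G'})$ lies outside $E$, and $E$ is inverse-closed because each of $D, D', \overline{D}, \overline{D'}$ is.

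To verify the PDS identity I would pass to the group ring and write $E = D + D' + DD' + \overline{D}\,\overline{D'}$, with each term lifted through the natural inclusions $\mathbb{Z}[G], \mathbb{Z}[G'] \hookrightarrow \mathbb{Z}[\mathcal{G}]$. The key observation is that the subalgebra generated by $\{1_G, D, \overline{D}, \tilde{G}\} \cup \{1_{G'}, D', \overline{D'}, \tilde{G'}\}$ is commutative: elements from the $G$-side commute pairwise because $\tilde{G}$ is central in $\mathbb{Z}[G]$ (for example, $D\overline{D} = D\tilde{G} - D^2 - D = \overline{D}D$), and any $G$-side element commutes with any $G'$-side element since $(g, 1_{G'})(1_G, g') = (g, g') = (1_G, g')(g, 1_{G'})$. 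Thus $EE$ can be expanded inside this commutative subalgebra exactly as in the abelian case, even though $G$ and $G'$ themselves need not be abelian.

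The computation itself uses the Paley-type identities $D^2 = \mu(\tilde{G} + 1_G) - D$, $D\overline{D} = \mu(\tilde{G} - 1_G)$, and $\tilde{G}D = \tilde{G}\overline{D} = k\tilde{G}$ (with $k = \frac{v-1}{2}$, $\mu = \frac{v-1}{4}$), together with their $G'$-analogues. Expanding $E^2$ term-by-term using commutativity and substituting these identities should collect everything into a combination of $1_\mathcal{G}$, $E$, and $\tilde{\mathcal{G}} = \tilde{G}\tilde{G'}$, yielding
\[ EE = \tfrac{v^2-5}{4}\, E + \tfrac{v^2-1}{4}\bigl(\tilde{\mathcal{G}} - E - 1_\mathcal{G}\bigr) + \tfrac{v^2-1}{2} \cdot 1_\mathcal{G}, \]
which is the group ring identity characterizing a $\bigl(v^2, \tfrac{v^2-1}{2}, \tfrac{v^2-5}{4}, \tfrac{v^2-1}{4}\bigr)$-PDS in $\mathcal{G}$.

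The main obstacle is the intermediate bookkeeping: several cross products (such as $\tilde{G}D'$ or $D\tilde{G'}$) are not themselves scalar multiples of $\tilde{\mathcal{G}}$, so I would need to carry these ``strip'' terms through the expansion and observe that their coefficients sum to zero in the final collection. Fortunately, every substitution and cancellation is a consequence of the commuting relations listed above, which hold verbatim regardless of whether $G$ and $G'$ are abelian, so the computation is formally the same as the one Davis performed in the abelian setting --- this is exactly the transfer phenomenon made precise by Lemma \ref{GroupRingRelationsLemma} and Theorem \ref{Productstononabelian}.
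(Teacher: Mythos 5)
Your overall strategy --- pass to $\Z[\mathcal{G}]$, note that the subalgebra generated by $D$, $\overline{D}$, $D'$, $\overline{D'}$ and the group sums is commutative because the two factors commute elementwise in $G \times G'$, and then run the same quadratic computation as in the abelian case --- is exactly the route the paper takes. The problem is your candidate set. The paper's set is $\mathcal{D} = D(1+D') + D^c(1+D'^c)$, i.e., $\bigl((G - 1_G)\times\{1_{G'}\}\bigr) \cup (D\times D') \cup (D^c\times D'^c)$, so the \emph{entire} punctured first factor sits on the axis. You have replaced that axis piece by the symmetric-looking $(D\times\{1_{G'}\}) \cup (\{1_G\}\times D')$, and the resulting set $E$ is not a PDS, so the identity you assert for $EE$ is false and no amount of bookkeeping will produce it.

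To see the failure concretely, note that the theorem must in particular hold for abelian inputs, so your recipe has to work for $G = G' = \Z_5$ with $D = D' = \{1,4\}$. There $E$ contains $(1,0)$, which has $6$ representations as a difference of elements of $E$, while $(2,0)\notin E$ has only $3$; a $(25,12,5,6)$-PDS requires $5$ and $6$ respectively. The structural reason is visible on characters: if $\chi = \chi_a\otimes\chi_0$ is nontrivial on $G$ and trivial on $G'$, then with $k = \tfrac{v-1}{2}$ one gets $\chi(E) = \chi_a(D) + k + k\chi_a(D) + k\chi_a(\overline{D}) = \chi_a(D)$, since $\chi_a(D)+\chi_a(\overline{D}) = -1$. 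Thus $\chi(E)$ takes the values $\tfrac{-1\pm\sqrt{v}}{2}$, whereas a Paley-type PDS in a group of order $v^2$ must have character sums $\tfrac{-1\pm v}{2}$; your three non-axis pieces cancel on such characters and leave only the too-small contribution of $D\times\{1_{G'}\}$. With the paper's axis piece the same sum is $-1 + k\bigl(\chi_a(D)+\chi_a(D^c)\bigr) = -\tfrac{v+1}{2}$, which is one of the required eigenvalues. So the fix is simply to use the paper's set (equivalently, Davis's original set, which you have mis-transcribed); once that substitution is made, your commutativity observations and the identities $D^2 = \mu(G+1_G)-D$ and $DD^c = \mu(G-1_G)$ do carry the computation through exactly as in the paper.
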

\begin{proof}
If $D$ and $D'$ are Paley-type PDSs in $G$ and $G'$, respectively, then $D^c=G-1_G-D$ and $D'^c = G' - 1_{G'} - D'$ are also Paley-type PDSs in $G$ and $G'$, respectively.  The following group ring equations then hold in $G$ and $G'$ as a consequence of the sets $D, D^c, D', D'^c$ being PDSs:

$$D D^c = D^c D =\frac{v-1}{4} G^*,$$
$$D'D'^c = D'^cD' =\frac{v-1}{4} G'^*,$$
where $G^*$ and $G'^*$ denote $G - 1_{G}$ and $G' - 1_{G'}$, respectively.

Our Paley-type PDS in ${\cal G} = G \times G'$ is given by ${\cal D}=D(1+D')+D^c(1+D'^c)$, as verified in the following group ring computation:

\begin{eqnarray*}
{\cal D}^2 & = & (D(1+D') + D^c(1+D'^c))^2 \\
& = & D^2 (1+D')^2 + 2DD^c(1+D')(1+D'^c) + (D')^2(1+D'^c)^2 \\
& = & \left(\frac{v-5}{4}D + \frac{v-1}{4}D^c + \frac{v-1}{2}1_G\right)\left( \frac{v+3}{4}D' + \frac{v-1}{4}D'^c + \frac{v+1}{2}1_{G'}\right) \\
& & + \frac{v-1}{2}G^*\left(1+\frac{v+3}{4}{G'}^*\right) \\
& & + \left(\frac{v-1}{4}D + \frac{v-5}{4}D^c + \frac{v-1}{2}1_G\right)\left( \frac{v-1}{4}D' + \frac{v+3}{4}D'^c + \frac{v+1}{2}1_{G'}\right) \\
& = & \frac{v^2-1}{4}1_{{\cal G}} + \frac{v^2-5}{4}{\cal D} + \frac{v^2-1}{4}({\cal G}-{\cal D}). \\
\end{eqnarray*}
\end{proof}

To illustrate the scope of Theorem~\ref{generalproduct}, consider the groups $G = \Z_{25} \rtimes_{21} \Z_{25}$ and $G' = G_2 = \left\langle T_U, \calB \right\rangle$ (with $q = 5$ and $m = 2$) from the discussion before Theorem \ref{thm:nonabpds2}. Both of these groups have $(625, 312, 155, 156)$-PDSs and hence ${\cal G} = G \times G'$ will have a $\left(5^8, \frac{5^8-1}{2}, \frac{5^8-5}{4}, \frac{5^8-1}{4}\right)$-PDS. We can continue to apply the theorem by first constructing a $\left(5^8, \frac{5^8-1}{2}, \frac{5^8-5}{4}, \frac{5^8-1}{4}\right)$-PDS in ${\cal G}' = \Z_{25}^2 \times \Z_5^4$ (both $\Z_{25}^2$ and $\Z_5^4$ have $(625, 312, 155, 156)$-PDSs, and Theorem~\ref{generalproduct} implies that their product will have a PDS), and we can then apply Theorem~\ref{generalproduct} to get a $\left(5^{16}, \frac{5^{16}-1}{2}, \frac{5^{16}-5}{4}, \frac{5^{16}-1}{4}\right)$-PDS in ${\cal G} \times {\cal G}'$. Repeated uses of the Theorem give constructions of Paley-type PDSs in groups of the form $G^{2^t}, {G'}^{2^t},{({\cal G} \times {\cal G}')}^{2^t}$, and ${\cal G}^{2^t}$. As long as the sizes of the groups are the same, we can repeatedly apply Theorem~\ref{generalproduct} to get Paley-type PDSs in larger groups. One general example of a family with a variety of exponents for the constituent groups is the following.

\begin{cor}
\label{Paleyspgroups}
The group $\Z_p^4 \times (\Z_{p^2} \rtimes_{p^2-p+1} \Z_{p^2}) \times (\Z_{p^4} \rtimes_{p^4-p^3+1} \Z_{p^4}) \times \cdots \times (\Z_{p^{2^t}} \rtimes_{p^{2^t}-p^{2^t-1}+1} \Z_{p^{2^t}})$ has a Paley-type PDS for all $t \ge 2$. 
\end{cor}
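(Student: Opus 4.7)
The plan is to prove Corollary \ref{Paleyspgroups} by induction on $t \ge 2$, iteratively invoking the product theorem (Theorem \ref{generalproduct}). The key observation is that the listed factors can be grouped so that at every step of the induction we multiply two groups of equal order, each already known to contain a Paley-type PDS.

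The basic ingredients are twofold. First, the group $\Z_p^4$, viewed as the additive group of $\F_{p^4}$, contains the classical Paley-type PDS consisting of the nonzero squares in $\F_{p^4}$; this is valid because $p^4 \equiv 1 \pmod 4$ for every odd prime $p$. Second, for each integer $s \ge 1$, the group $\Z_{p^{2^s}} \rtimes_{(p-1)p^{2^s-1}+1} \Z_{p^{2^s}}$ is precisely the group $\widehat{G_{2^s}}$ of Section \ref{sec:SemidirectLargeCenter} (taking the parameter there to be $2^s \ge 2$), and hence contains a Paley-type PDS by Theorem \ref{mainPaleyconstr}.

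Next I would build up the full product by induction. Define
\[ K_0 \colonequals \Z_p^4 \times \bigl(\Z_{p^2} \rtimes_{p^2-p+1} \Z_{p^2}\bigr); \]
both factors have order $p^4$ and each possesses a Paley-type PDS, so Theorem \ref{generalproduct} produces a Paley-type PDS in $K_0$. For $i \ge 1$, inductively set
\[ K_i \colonequals K_{i-1} \times \bigl(\Z_{p^{2^{i+1}}} \rtimes \Z_{p^{2^{i+1}}}\bigr). \]
A short induction shows $|K_{i-1}| = p^{2^{i+2}} = |\Z_{p^{2^{i+1}}} \rtimes \Z_{p^{2^{i+1}}}|$, so the two factors have equal order; since each contains a Paley-type PDS ($K_{i-1}$ by the inductive hypothesis and the new factor by the ingredients above), Theorem \ref{generalproduct} yields a Paley-type PDS in $K_i$. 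The group displayed in Corollary \ref{Paleyspgroups} for parameter $t$ is exactly $K_{t-1}$, so this completes the proof. The argument is essentially bookkeeping of orders plus repeated application of already-proved existence results; the only point requiring care is verifying at every step that the two factors have the same order, which follows from the elementary identity $2 \cdot 2^{i+1} = 2^{i+2}$, and I do not expect any serious obstacle.
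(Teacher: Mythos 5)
Your proposal is correct and follows essentially the same route as the paper: the corollary is obtained by repeatedly applying Theorem \ref{generalproduct}, pairing at each stage two equal-order groups already known to contain Paley-type PDSs (the classical Paley PDS in $\Z_p^4 \cong (\F_{p^4},+)$ and the PDSs in $\widehat{G_{2^s}}$ from Theorem \ref{mainPaleyconstr}), with the order bookkeeping $|K_{i-1}| = p^{2^{i+2}}$ exactly as you describe. No gaps.
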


Paley-type PDSs can in turn be used to generate Paley-Hadamard DSs using the Stanton-Sprott construction \cite{Stanton_Sprott_1958}.  As with the recursive construction for Paley-type PDSs, we show that the input groups need not be abelian. Since we now have constructions of nonabelian Paley-type PDSs, we will be able to construct new Paley-Hadamard DSs that are nonabelian. To our knowledge, these are the first nonabelian DSs with these parameters.

\begin{thm}
\label{twinprimes}
Suppose that the group $G$ contains a Paley-type $\left(v, \frac{v-1}{2}, \frac{v-5}{4}, \frac{v-1}{4}\right)$-PDS and the group $G'$ contains a skew Hadamard $\left(v \pm 2,\frac{(v \pm 2)-1}{2},\frac{(v \pm 2)-3}{4}\right)$-DS.  Then,the product group $G \times G'$ contains a Paley-Hadamard DS in the Stanton-Sprott (Twin prime power) family.
\end{thm}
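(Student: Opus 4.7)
My plan is to mirror the classical (abelian) Stanton--Sprott construction, modified to accommodate nonabelian groups, and verify the required group-ring identity directly, in the spirit of Theorem~\ref{generalproduct}. Without loss of generality, take $|G'| = v + 2$; the case $|G'| = v - 2$ is handled by an analogous ansatz in which the ``extra piece'' sits in $\{1_G\} \times G'$ rather than $G \times \{1_{G'}\}$. Set $D^c \colonequals G - 1_G - D$, which is again a Paley-type PDS, and note that skew Hadamardness gives $D'^{(-1)} = G' - 1_{G'} - D'$ with $D'$ and $D'^{(-1)}$ disjoint and partitioning $G' \setminus \{1_{G'}\}$. The candidate Paley--Hadamard difference set in $\mathcal{G} \colonequals G \times G'$ is
\[\mathcal{D} = (D \times D') \cup (D^c \times D'^{(-1)}) \cup (G \times \{1_{G'}\}),\]
whose three pieces are pairwise disjoint (their $G'$-coordinates lie in $D'$, $D'^{(-1)}$, and $\{1_{G'}\}$, respectively), giving $|\mathcal{D}| = \tfrac{(v-1)(v'-1)}{2} + v = \tfrac{vv'-1}{2}$, the target cardinality.

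Next, I would verify $\mathcal{D}\mathcal{D}^{(-1)} = \tfrac{vv'-3}{4}\mathcal{G} + \tfrac{vv'+1}{4}\cdot 1_{\mathcal{G}}$ in $\Z[\mathcal{G}]$. Since $D = D^{(-1)}$, $D^c = (D^c)^{(-1)}$, and $G = G^{(-1)}$,
\[\mathcal{D}^{(-1)} = (D \times D'^{(-1)}) \cup (D^c \times D') \cup (G \times \{1_{G'}\}),\]
so $\mathcal{D}\mathcal{D}^{(-1)}$ expands into nine terms of the shape $(XY) \otimes (X'Y')$ with $X,Y \in \{D, D^c, G\}$ and $X', Y' \in \{D', D'^{(-1)}, 1_{G'}\}$. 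I would substitute the identities
\begin{align*}
 DD &= -D + \tfrac{v-1}{4}G + \tfrac{v-1}{4}\cdot 1_G, & D^cD^c &= -D^c + \tfrac{v-1}{4}G + \tfrac{v-1}{4}\cdot 1_G, \\
 DD^c = D^cD &= \tfrac{v-1}{4}G - \tfrac{v-1}{4}\cdot 1_G, & D'D'^{(-1)} = D'^{(-1)}D' &= \tfrac{v'-3}{4}G' + \tfrac{v'+1}{4}\cdot 1_{G'}, \\
 D'D' &= \tfrac{v'+1}{4}G' - D' - \tfrac{v'+1}{4}\cdot 1_{G'}, & D'^{(-1)}D'^{(-1)} &= \tfrac{v'+1}{4}G' - D'^{(-1)} - \tfrac{v'+1}{4}\cdot 1_{G'},
\end{align*}
along with $DG = GD = D^cG = GD^c = \tfrac{v-1}{2}G$ and $GG = vG$, and then collect terms grouped by their $G'$-factor. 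Using $D + D^c = G - 1_G$ and $D' + D'^{(-1)} = G' - 1_{G'}$ repeatedly, the $D$, $D^c$, $D'$, and $D'^{(-1)}$ contributions should eliminate; the specialization $v' = v + 2$ then makes the residual $1_G \otimes G'$ and $G \otimes 1_{G'}$ cross-terms cancel, leaving precisely $\tfrac{vv'-3}{4}\,G \otimes G' + \tfrac{vv'+1}{4}\cdot 1_G \otimes 1_{G'}$.

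The only genuinely new issue over the abelian proof is the identity $D'D'^{(-1)} = D'^{(-1)}D'$ for the possibly nonabelian skew Hadamard set $D'$. This follows from a standard representation-theoretic argument: for any nontrivial irreducible unitary representation $\rho$ of $G'$, the equation $\rho(D')\rho(D')^{*} = (k' - \lambda')I$ forces $\rho(D')^{*}\rho(D') = (k' - \lambda')I$ as well (the matrices are square and, since $k' - \lambda' > 0$, invertible), and the two group-ring elements then agree on every irreducible and hence are equal in $\Z[G']$. The analogous identities involving the PDS $D$ cause no trouble because $D$ is inverse-closed. I expect the principal obstacle to be the bookkeeping in the nine-term expansion rather than anything conceptual; the underlying point, which is precisely the philosophy of this section, is that the calculation is formally identical to its abelian counterpart, with no appeal to commutativity anywhere.
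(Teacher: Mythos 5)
Your proposal is correct and takes essentially the same route as the paper: the same candidate set $\mathcal{D} = (G \times \{1_{G'}\}) \cup (D \times D') \cup (D^c \times D'^{(-1)})$ and the same direct group-ring verification that $\mathcal{D}\mathcal{D}^{(-1)} = \frac{vv'-3}{4}\mathcal{G}^* + \frac{vv'-1}{2}1_{\mathcal{G}}$. Your two refinements---justifying $D'D'^{(-1)} = D'^{(-1)}D'$ for a nonabelian skew Hadamard DS and noting that in the $v-2$ case the extra piece must be $\{1_G\} \times G'$ (the smaller factor)---are points the paper uses or asserts implicitly, and both are handled correctly.
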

\begin{proof}
We will prove the case where $|G'| = v+2$, with the $v-2$ case being extremely similar.
If $D$ is a Paley-type $\left(v,\frac{v-1}{2},\frac{v-5}{4},\frac{v-1}{4}\right)$-PDS in $G$, then $D^c=G-1_G-D$ is also a Paley-type PDS. We use the facts from the proof of Theorem~\ref{generalproduct} together with the similar equations for the skew-Hadamard DS $D'$ in $G'$ to get the following.


The set ${\cal D} \colonequals G + DD' + D^cD'^{(-1)} \subset G \times G'$ is a DS as verified below:

\begin{eqnarray*}
{\cal D} {\cal D}^{(-1)} &= & \left(G + DD' + D^cD'^{(-1)}\right)\left(G + DD' + D^cD'^{(-1)}\right)^{(-1)} \\
&= & \left(G + DD' + D^cD'^{(-1)}\right)\left(G + DD'^{(-1)} + D^cD'\right) \\
&= & G^2 + (GD)D'^{(-1)}+ (GD^c)D'+(GD)D'+{D}^2(D'D'^{(-1)})+(DD^c)D'^2 \\
& & +(GD')D'^{(-1)}+(DD^c)(D'^{(-1)})^2+{D^c}^2(D'^{(-1)}D) \\
&= & vG(1_{G'})+(v-1)G(D' + D'^{(-1)}) \\
& & + \left(\frac{v-1}{4}G^*\right)\left(\frac{v-3}{4}+\frac{v-1}{4}\right){G'}^* \\
& & +\left(\frac{v-1}{4}G^*\right)\left(\left(\left(\frac{v-5}{4}+\frac{v-1}{4}\right)G^*+(v-1)1_G\right)\left(\frac{v+1}{2}1_{G'}+\frac{v-1}{4}{G'}^*\right)\right).
\end{eqnarray*}




 

Combining terms leads to the equation

$${\cal D}{\cal D}^{(-1)} = \frac{v^2+2v-3}{4}{\cal G}^* + \frac{v^2+2v-1}{2}1_{{\cal G}},$$

thus proving the result.
\end{proof}


Recall the group $G_2$ from Section \ref{sect:aff}, letting $q = 3$ and $m = 2$. As examples of Theorem~\ref{twinprimes}, the nonabelian groups  $G_2 \times \Z_{83}$ and $(\Z_9 \rtimes_{7} \Z_9) \times \Z_{83}$ each have a $(6723, 3361, 1680)$-difference set; the nonabelian groups $(G_2)^2 \times \Z_{6563}$, $(\Z_9 \rtimes_7 \Z_9) \times G_2 \times \Z_{6563}$, $(\Z_9 \rtimes_7 \Z_9)^2 \times \Z_{6563}$, and $(\Z_{81} \rtimes_{55} \Z_{81}) \times \Z_{6563}$ have $(45724643, 22862321, 11431160)$-difference sets; and the nonabelian group $(\Z_{27} \rtimes_{19} \Z_{27}) \times \Z_{727}$ has a $(529983, 264991, 132495)$-difference set. A more general corollary is the following (although there will be many nonabelian groups containing a Paley-Hadamard difference set that are not contained in this result).

\begin{cor}
Let $r \ge 2$. If $q = p^{2^r} \pm 2$ is prime, then the nonabelian group \[(\Z_{p^{2^{r}}} \rtimes_{(p-1)p^{2^{r-1}-1}+1} \Z_{p^{2^{r}}}) \times \Z_q\] has a $\left(qp^{2^{r}}, (qp^{2^{r}}-1)/2, (qp^{2^r}-3)/4\right)$-difference set.
\end{cor}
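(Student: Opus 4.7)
The plan is to apply the twin prime power construction of Theorem \ref{twinprimes} directly, feeding in a Paley-type PDS from Theorem \ref{mainPaleyconstr} on the nonabelian side and a classical Paley skew Hadamard difference set on the cyclic side. First, I would identify the nonabelian factor with $\widehat{G_t}$ for $t = 2^{r-1}$, namely $\widehat{G_{2^{r-1}}} = \Z_{p^{2^{r-1}}} \rtimes_{(p-1)p^{2^{r-1}-1}+1} \Z_{p^{2^{r-1}}}$, whose order is $p^{2 \cdot 2^{r-1}} = p^{2^r}$. Since the hypothesis $r \ge 2$ gives $t \ge 2$, Theorem \ref{mainPaleyconstr} supplies a Paley-type $\left(p^{2^r}, \tfrac{p^{2^r}-1}{2}, \tfrac{p^{2^r}-5}{4}, \tfrac{p^{2^r}-1}{4}\right)$-PDS inside this group.

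Next, I would establish that $\Z_q$ carries a skew Hadamard $\left(q, \tfrac{q-1}{2}, \tfrac{q-3}{4}\right)$-difference set. The key observation is that $r \ge 2$ forces the exponent $2^r$ to be even, so $p^{2^r} = \bigl(p^{2^{r-1}}\bigr)^2$ is the square of an odd integer, and hence $p^{2^r} \equiv 1 \pmod{4}$; consequently $q = p^{2^r} \pm 2 \equiv 3 \pmod{4}$. Since $q$ is also assumed to be prime, Paley's original construction yields the desired skew Hadamard DS as the set of nonzero squares in $\F_q$, where the skewness is exactly the statement that $-1$ is a nonsquare modulo $q$, which is precisely what $q \equiv 3 \pmod{4}$ provides.

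Finally, I would feed these two inputs into Theorem \ref{twinprimes} with $v = p^{2^r}$ and $|G'| = q = v \pm 2$. The theorem then outputs a Paley-Hadamard DS of Stanton-Sprott type with parameters $\left(qp^{2^r}, \tfrac{qp^{2^r}-1}{2}, \tfrac{qp^{2^r}-3}{4}\right)$ in the product group $\widehat{G_{2^{r-1}}} \times \Z_q$, which is exactly the claim.

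I anticipate no real obstacle: the corollary is essentially a packaging of Theorems \ref{mainPaleyconstr} and \ref{twinprimes}, and the only nontrivial verification is the mod $4$ congruence guaranteeing that the skew Hadamard hypothesis of Theorem \ref{twinprimes} is automatic in this family. Everything else — matching the order $p^{2^r}$ with the index $t = 2^{r-1}$ of $\widehat{G_t}$, and reading off the Stanton-Sprott parameters — is bookkeeping.
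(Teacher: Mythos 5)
Your proof is correct and matches the paper's intent exactly: the corollary is stated without proof as an immediate packaging of Theorem \ref{twinprimes}, taking as inputs the Paley-type PDS of Theorem \ref{mainPaleyconstr} with $t = 2^{r-1}$ and Paley's classical skew Hadamard difference set in $\Z_q$, with the congruence $q = p^{2^r} \pm 2 \equiv 3 \pmod 4$ being the only point that genuinely needs checking. Note that your identification of the nonabelian factor as $\widehat{G_{2^{r-1}}}$, of order $p^{2^r}$, silently corrects what appears to be a typo in the corollary's displayed group, whose cyclic factors are written as $\Z_{p^{2^{r}}}$ even though the twist exponent $(p-1)p^{2^{r-1}-1}+1$ and the stated parameters $(qp^{2^{r}}, (qp^{2^{r}}-1)/2, (qp^{2^r}-3)/4)$ both require factors $\Z_{p^{2^{r-1}}}$.
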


\section{Many product theorems allow nonabelian groups}
\label{sec:product}
In the previous section, we used group rings to prove that two results previously known in abelian groups also hold in nonabelian groups. In this section, we will show that in some cases we can avoid the quadratic group ring calculations entirely because the relations needed to simplify the calculations do not depend on whether the group is abelian or not. 

\begin{lem} 
\label{GroupRingRelationsLemma}
Suppose the group $G$ has a partition of the nonidentity elements into PDSs $P_1, P_2,...,P_n$ all of the Latin square type or all of the negative Latin square type.  Then,the quadratic group ring relations relating the $P_i$ and $P_j$ are strictly determined by the parameters and do not depend on whether the group is abelian or not.

\end{lem}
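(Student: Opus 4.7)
The plan is to show that each product $P_iP_j$ can be expressed as a $\Z$-linear combination of $1_G, P_1, \ldots, P_n$ whose coefficients depend only on $n$ and on the (negative) Latin square type parameters, with no reference to the multiplicative structure of $G$. This in particular makes those quadratic relations agnostic to whether $G$ is abelian.

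By Corollary~\ref{AmorphicPDS}, the partition $\{P_1, \ldots, P_n\}$ generates an amorphic association scheme, from which we extract two consequences: (i) the $P_k$ commute pairwise in $\Z[G]$, and (ii) for any $i \neq j$ the union $P_i \cup P_j$ is itself a regular PDS of the same (negative) Latin square type, with parameters $(k_{ij}, \lambda_{ij}, \mu_{ij})$ obtained by substituting $r_i + r_j$ into the appropriate Latin square type formula. Since each $P_k$ and each $P_i \cup P_j$ is regular and inverse-closed, the standard PDS group-ring equation from Section~\ref{sect:prelim}, combined with the partition identity $1_G + P_1 + \cdots + P_n = G$, yields
\begin{align*}
 P_i^2 &= k_i \cdot 1_G + \lambda_i P_i + \mu_i \sum_{k \neq i} P_k,\\
 (P_i + P_j)^2 &= k_{ij} \cdot 1_G + \lambda_{ij}(P_i + P_j) + \mu_{ij} \sum_{k \neq i, j} P_k.
\end{align*}

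Using the commutativity from (i), we have $(P_i + P_j)^2 = P_i^2 + 2 P_iP_j + P_j^2$, and therefore
\[ 2 P_iP_j \;=\; (P_i + P_j)^2 - P_i^2 - P_j^2. \]
Substituting the two displayed equations expresses $2P_iP_j$ as an explicit integer combination of $1_G, P_1, \ldots, P_n$ whose coefficients depend only on $n, r_i, r_j$. A short parameter calculation confirms that the coefficient of $1_G$ vanishes (this reduces to $k_{ij} = k_i + k_j$) and that the remaining coefficients are all even (via identities such as $(r_i + r_j)^2 - r_i^2 - r_j^2 = 2 r_ir_j$), producing in the Latin square type case the clean formula
\[ P_iP_j \;=\; r_j(r_i - 1) P_i + r_i(r_j - 1) P_j + r_ir_j \sum_{k \neq i, j} P_k, \]
together with the analogous formula $r_j(r_i + 1)P_i + r_i(r_j + 1)P_j + r_ir_j \sum_{k \neq i,j}P_k$ in the negative Latin square type case. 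Since no step in this derivation invoked anything about $G$ beyond the partition structure and the PDS parameters, the quadratic relation is forced and must hold uniformly in abelian and nonabelian groups.

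The only point requiring any care is the divisibility-by-$2$ check needed to pass from $2P_iP_j$ to $P_iP_j$ inside $\Z[G]$, but this is immediate from the parameter arithmetic above, so no genuine obstacle arises.
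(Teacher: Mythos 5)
Your proposal is correct and follows essentially the same route as the paper: invoke Corollary~\ref{AmorphicPDS} to get commutativity and that $P_i \cup P_j$ is a PDS of the same type, then expand $(P_i+P_j)^2$ two ways and solve for $P_iP_j$, so the relations depend only on the parameters. Your extra bookkeeping (checking the $1_G$-coefficient cancels via $k_{ij}=k_i+k_j$ and that all coefficients are even before dividing $2P_iP_j$ by $2$, yielding $P_iP_j = r_j(r_i-1)P_i + r_i(r_j-1)P_j + r_ir_j\sum_{k\neq i,j}P_k$ in the Latin square case) is a correct and slightly more careful rendering of the same computation.
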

\begin{proof}
 Let $P_i$ be a $(v,k_i,{\lambda}_i,{\mu}_i)$-PDS. Then:
$${P_i}^2 = (k_i-\mu_i)1_G + \lambda_i P_i + \mu_i (G - P_i).$$
Now suppose that $P_i$ and $P_j$ are part of the partition of $G$ into Latin or negative Latin square type PDSs.  Let $P_i$ be a $(v,k_i,{\lambda}_i,{\mu}_i)$-PDS and $P_i$ be a $(v,k_j,{\lambda}_j,{\mu}_j)$-PDS.
The key to getting a relation for $P_iP_j$ is the fact that the union of disjoint PDSs of Latin (alternatively negative Latin square type) will also be a PDS of the same type by Corollary \ref{AmorphicPDS}.  The same corollary ensures $P_iP_j = P_jP_i$.

Therefore, we can write two equations for $(P_i+P_j)^2$, the first of which is expanding and using the individual PDS parameters:
\begin{align*}
(P_i+P_j)^2 &= P_iP_j+ P_jP_i+ {P_i}^2 + {P_j}^2\\
            &=2P_iP_j+ (k_i-\mu_i)1_G + \lambda_i P_i + \mu_i (G - P_i)+(k_j-\mu_j) + \lambda_j P_j + \mu_j (G - P_j).
\end{align*}

Now we will use the fact that $P_i \cup P_j$ is a $(v,k_i+k_j,\lambda,\mu)$-PDS.
$${(P_i+P_j)}^2 = (k_i+k_j-\mu)1_G + \lambda (P_i+P_j) + \mu (G - P_i-P_j).$$

Setting the equations equal and solving yields:
$$P_iP_j = (\lambda -\lambda_i -\mu_j)P_i + (\lambda-\mu_i-\lambda_j)P_j + (\mu - \mu_i -\mu_j)(G-1-P_i-P_j).$$
Hence, the relations for both ${P_i}^2$ and $P_iP_j$ are determined by the parameters.  Suppose that $G$ has a partition of the nonidentity elements into PDSs $P_1, P_2,...,P_n$ and $G'$ has a partition of the nonidentity elements into PDSs $P'_1, P'_2,...,P'_n$ where $P_i$ and $P'_i$ have the same parameters. Then,the relations for 
${P_i}^2$ and ${P'_i}^2$ are the same for $P_i$ relative to $G$ as for $P'_i$ relative to $G'$ and furthermore the relations for 
$P_iP_j$ and $P'_iP'_j$  are the same for $P_i$ and $P_j$ relative to $G$ as for $P'_i$ and $P'_j$ relative to $G'$. The result follows.
\end{proof}

\begin{thm}
\label{Productstononabelian}
Suppose that there is a group $G$ having a partition $
\calP = \{ 1_G, P_1,P_2,...,P_n \}$ where all the $P_i$ are Latin square type PDSs or negative Latin square type PDSs and $1_G$ is the identity in $G$. Let $G'$ be any other group, and suppose $D$ is a PDS in $G \times G'$ which is constructed as a union of sets of the form $A_ix'_i$, $A_i \in \calP, x'_i  \in G'$.  Suppose there is another group $\widehat{G}$ that has a partition of its nonidentity elements into PDSs $\{ \widehat{P_1}, \widehat{P_2},...,\widehat{P_n} \}$ where $P_i$ and $\widehat{P_i}$ have the same parameters for all $i$. If we define the set 
$\widehat{D}$ by replacing each $P_i$ with $\widehat{P_i}$ and $1_G$ with $1_{\widehat{G}}$, then $\widehat{D}$ will be a PDS in  $\widehat{G} \times G'$ with the same parameters as $D$. 
\end{thm}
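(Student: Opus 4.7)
The plan is to exploit the direct product structure $\Z[G \times G'] \cong \Z[G] \otimes \Z[G']$ and to verify the PDS equation for $\widehat{D}$ one $G'$-coordinate at a time. Writing $D = \sum_t A_{i_t} \cdot x'_{i_t}$ in $\Z[G \times G']$ and using the fact that elements of $G$ commute with elements of $G'$ inside the direct product (regardless of whether $G$ itself is abelian), we have
\[
DD^{(-1)} = \sum_{t,s} A_{i_t} A_{i_s}^{(-1)} \cdot x'_{i_t}(x'_{i_s})^{-1}.
\]
Equating this to $(k - \mu) \cdot 1_{G \times G'} + (\lambda - \mu) D + \mu (G \times G')$ and reading off the coefficient of each $g' \in G'$ produces, for each $g' \in G'$, an identity in $\Z[G]$ of the form
\[
\sum_{(t,s):\ x'_{i_t}(x'_{i_s})^{-1} = g'} A_{i_t} A_{i_s}^{(-1)}
  = (\lambda - \mu) \sum_{t:\ x'_{i_t} = g'} A_{i_t} + \mu G + \delta_{g',1_{G'}} (k - \mu) \cdot 1_G.
\]

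Next I would observe that each $A_{i_t} \in \calP$ is either $\{1_G\}$ or some $P_k$, so each product $A_{i_t} A_{i_s}^{(-1)}$ is one of $1_G$, a single $P_k$, or a product $P_k P_\ell$ (using that the regular PDSs $P_k$ are inverse-closed, so $P_k^{(-1)} = P_k$). Lemma~\ref{GroupRingRelationsLemma} then guarantees that each $P_k P_\ell$ is a specific integer linear combination of $1_G, P_1, \dots, P_n, G$ whose coefficients depend only on the PDS parameters, and not on the host group. Consequently, each of the identities above reduces to an equality of two specific $\Z$-linear combinations of the symbols $1_G, P_1, \dots, P_n, G$, whose scalar coefficients depend only on the parameters of the $P_k$ together with the combinatorial data $\{(A_{i_t}, x'_{i_t})\}$ describing how $D$ is assembled.

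To finish, I would apply Lemma~\ref{GroupRingRelationsLemma} inside $\Z[\widehat{G}]$: because $\widehat{P_k}$ has the same parameters as $P_k$ for every $k$, the products $\widehat{P_k}\widehat{P_\ell}$ satisfy identical symbolic expansions, with $1_G, P_k, G$ replaced by $1_{\widehat{G}}, \widehat{P_k}, \widehat{G}$. So each $\Z[G]$-identity derived for $D$ translates symbol-by-symbol to a valid $\Z[\widehat{G}]$-identity for $\widehat{D}$, and reassembling these identities across all $g' \in G'$ yields
\[
\widehat{D}\widehat{D}^{(-1)} = k \cdot 1_{\widehat{G}\times G'} + \lambda \widehat{D} + \mu(\widehat{G}\times G' - \widehat{D} - 1_{\widehat{G}\times G'}),
\]
which is the PDS equation for $\widehat{D}$ with the same parameters as $D$.

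The main obstacle is essentially bookkeeping rather than any deep difficulty: one needs to confirm that the coefficient-extraction step over $G'$ is purely formal and uses only the commutation of $G$ with $G'$ inside the direct product, and to verify that the linear independence of $\{1_G, P_1, \dots, P_n\}$ in $\Z[G]$ (which follows from the $P_k$'s partitioning $G - \{1_G\}$, with $G = 1_G + \sum_k P_k$) makes the symbol-by-symbol translation to $\Z[\widehat{G}]$ unambiguous. Both are straightforward once set up carefully, but they constitute the core of the argument.
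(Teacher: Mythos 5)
Your proposal is correct and follows essentially the same route as the paper: both arguments expand $DD^{(-1)}$ block-by-block, invoke Lemma~\ref{GroupRingRelationsLemma} so that every product $P_iP_j$ (and $P_i^2$, $1_G\cdot P_i$) is a parameter-determined combination of $1_G, P_1,\dots,P_n, G$, and then transfer the identity verbatim to $\widehat{G}\times G'$. Your coefficient-extraction over $g'\in G'$ and the appeal to linear independence of $\{1_G,P_1,\dots,P_n\}$ are just a more careful bookkeeping of the paper's ``same count of terms'' step, and they are a welcome tightening rather than a different method.
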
	
\begin{proof}
Let $D$ be a $(v,k,\lambda, \mu)$-PDS in $G \times G'$ so that:
$$DD^{(-1)}= (k-\mu)1_{G \times G'} + \lambda D + \mu (G \times G'-D).$$

Consider $\widehat{D}{\widehat{D}}^{(-1)}$. Each term in the expansion will have one of the following forms:
\begin{itemize}
\item[1.] $(1_{\widehat{G} }x' )(1_{\widehat{G}} y') = 1_{\widehat{G} }x' y'$ where $x',y' \in G'$. In our calculation of $DD^{(-1)}$ we have the corresponding term $(1_G x' )(1_G y') = 1_Gx' y'$.
\item[2.] $(1_{\widehat{G} }x' )(\widehat{P_i} y') = \widehat{P_i} x' y'$ where $x',y' \in G'$. In our calculation of $DD^{(-1)}$ we have the corresponding term $(1_G x' )(P_i y') = P_ix' y'$.
\item[3.] $(\widehat{P_i} x')(\widehat{P_i} y') ={\widehat{P_i}}^2x' y'=((k_i-\mu_i)1_{\widehat{G}} + \lambda_i \widehat{P_i} + \mu_i (\widehat{G} - \widehat{P_i}))x'y'$ where $x',y' \in G'$, In our calculation of $DD^{(-1)}$ we have the corresponding term $(P_i x' )(P_i y') = {P_i}^2x' y'=(k_i-\mu_i)1_G + \lambda_i P_i + \mu_i (G - P_i)x'y'$, where both $P_i$ and $\widehat{P_i}$ are $(v,k_i,\lambda_i,\mu_i)$-PDSs.
\item[4.] $(\widehat{P_i}x')(\widehat{P_j}y') = \widehat{P_i}\widehat{P_j} (x',y')$ where $x',y' \in G'$. In our calculation of $DD^{(-1)}$ we have the corresponding term $(P_ix')(P_jy') = (P_iP_j)(x'y')$.   By the preceding lemma, we know that $P_iP_m = aP_i + bP_j + c(G-1_G-P_i-P_j)$ means that $\widehat{P_i}\widehat{P_m} = a\widehat{P_i} + b\widehat{P_j} + c(G-1_{\widehat{G}}-\widehat{P_i}-\widehat{P_j})$ and vice versa, since the quadratic group ring equations relating the $P_i$ are determined by the parameters.  
\end{itemize}
Therefore, when we expand $\widehat{D}{\widehat{D}}^{(-1)}$ we will the exact same count of terms for $\widehat{P_i} x'$ and $1_{\widehat{G}}x'$ for any $x' \in G'$ as we would respectively for $P_ix'$ and $1_Gx'$ when calculating $DD^{(-1)}$. It follows that:
$$\widehat{D}{\widehat{D}}^{(-1)} = (k-\mu)1_{\widehat{G} \times G'} + \lambda \widehat{D} + \mu (\widehat{G} \times G' -\widehat{D}).$$
\end{proof}

As it so happens, there are rather many product constructions that fit the hypotheses for the theorem including all the product theorems in \cite{Davis_Xiang_2004, Polhill_2008, Polhill_2019, Polhill_2009a, Polhill_etal_2013}.  We will use a particular construction in 3-groups that incorporates the examples from Sections \ref{sect:aff} and \ref{sec:SemidirectLargeCenter} to give the reader an idea of what can be done with these generalizations.

\begin{subsection}{3-groups}
In the group $G=\Z_3 \times \Z_3 = \langle x,y \rangle$, we can partition the nonidentity elements into four trivially intersecting subgroups of order $\sqrt{|G|}=3$ with the identity removed: $H_1 = \{ x,x^2 \}, H_2 = \{ xy, x^2y^2 \}, H_3 = \{ xy^2, x^2y \}, $ and $H_4 = \{ y, y^2 \} $.  We can then produce a partition of Latin square type PDSs as \[L_0 \colonequals H_3 \cup H_4, \; L_1 \colonequals H_1,\; L_2 \colonequals H_2\] or a negative Latin square type partition as \[C_0 \colonequals \varnothing, \;C_1 \colonequals H_1 \cup H_2,\; C_2 \colonequals H_3 \cup H_4.\]  

Now we consider groups order 81. From Section \ref{sect:aff} we can obtain a partition of the nonidentity elements in the groups ${G_1}^+$ and $G_2$ into three PDSs $L_0, L_1,$ and $L_2$ of cardinalities $32, 24,$ and $24$, respectively, and similarly the group ${G_1}^{-}$ into three PDSs $C_0, C_1,$ and $C_2$ of cardinalities $20, 30,$ and $30$.  From Section \ref{sec:SemidirectLargeCenter} we can obtain a partition of the nonidentity elements of either $\Z_{9} \times \Z_{9}$ or $\Z_{9} \rtimes_{7} \Z_{9}$ into three PDSs $L_0, L_1,$ and $L_2$ of cardinalities $32, 24,$ and $24$.  

Using these partitions and applying Theorem \ref{Productstononabelian} to \cite[Theorems 2.1--2.3]{Polhill_2008} in the case of $p=3$ updates \cite[Corollary 5.1]{Polhill_2008} and gives us infinite families of Latin and negative Latin square type PDSs that now include nonabelian groups.

\begin{cor}
Suppose $G$ with $|G| = 3^{2m}$ has a partition of its nonidentity elements into three Latin square type PDSs $L_0,L_1,$ and $L_2$ of cardinality $(3^{m-1}+1)(3^m-1), (3^{m-1})(3^m-1),$ and $(3^{m-1})(3^m-1)$ respectively.  Suppose also that $G'$ with $|G'| = 3^{2n}$ has a partition of its nonidentity elements into three Latin square type PDSs $C_0,C_1,$ and $C_2$ of cardinality $(3^{m-1}-1)(3^m+1), (3^{m-1})(3^m+1),$ and $(3^{m-1})(3^m+1)$ respectively.  Then,the following are negative Latin square type PDSs in $G \times G'$ with parameters $(3^{m+n-1}-1)(3^{m+n}+1), (3^{m+n-1})(3^{m+n}+1),$ and $(3^{m+n-1})(3^{m+n}+1)$ respectively: $$\widehat{C_0} = \left([\left(L_0 \cup \{ 1_G \} \right) \times \left(C_0 \cup \{ 1_{G'}\}\right)] \cup \left(L_1 \times C_1\right) \cup \left(L_2 \times C_2\right)\right) - \{ 1_G \times 1_{G'} \},$$
$$\widehat{C_1} = [(L_0 \cup \{ 1_G \} ) \times C_1] \cup [L_1 \times C_2] \cup [L_2 \times (C_0 \cup \{ 1_{G'}\})], $$
$$\widehat{C_2} = [(L_0 \cup \{ 1_G \} ) \times C_2] \cup [L_1 \times (C_0 \cup \{ 1_{G'}\})] \cup [L_2 \times C_1].$$
Similarly we have Latin square type PDSs in $G \times G$ with parameters $(3^{2m-1}+1)(3^{2m}-1), (3^{2m-1})(3^{2m}-1),$ and $(3^{2m-1})(3^{2m}-1)$ respectively: $$\widehat{L_0} = ([(L_0 \cup \{ 1_G \} ) \times (L_0 \cup \{ 1_{G})] \cup [L_1 \times L_1] \cup [L_2 \times L_2]) - \{ 1_G \times 1_{G} \},$$
$$\widehat{L_1} = [(L_0 \cup \{ 1_G \} ) \times L_1] \cup [L_1 \times L_2] \cup [L_2 \times (L_0 \cup \{ 1_{G}\})], $$
$$\widehat{L_2} = [(L_0 \cup \{ 1_G \} ) \times L_2] \cup [L_1 \times (L_0 \cup \{ 1_{G}\})] \cup [L_2 \times L_1].$$
Finally, we could instead construct Latin square type PDSs in $G' \times G'$ with parameters $(3^{2n-1}+1)(3^{2n}-1), (3^{2n-1})(3^{2n}-1),$ and $(3^{2n-1})(3^{2n}-1)$ respectively: $$\widehat{L_0} = ([(C_0 \cup \{ 1_{G'} \} ) \times (C_0 \cup \{ 1_{G'})] \cup [C_1 \times C_1] \cup [C_2 \times C_2]) - \{ 1_{G'} \times 1_{G'} \},$$
$$\widehat{L_1} = [(C_0 \cup \{ 1_{G'} \} ) \times C_1] \cup [C_1 \times C_2] \cup [C_2 \times (C_0 \cup \{ 1_{G'}\})], $$
$$\widehat{L_2} = [(C_0 \cup \{ 1_{G'} \} ) \times C_2] \cup [C_1 \times (C_0 \cup \{ 1_{G'}\})] \cup [C_2 \times C_1].$$

\end{cor}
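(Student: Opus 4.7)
The plan is to combine the abelian product theorems of~\cite[Theorems 2.1--2.3]{Polhill_2008} with the substitution principle provided by Theorem~\ref{Productstononabelian}, applied to each factor in turn. First I would specialize the corollary to its abelian case: taking $G$ and $G'$ to be abelian models of the appropriate orders (for example, $\Z_{3^m}^2$ and $\Z_{3^n}^2$ equipped with the PDS partitions of~\cite{Davis_1994, Polhill_2002}), the statements that $\widehat{C_0}, \widehat{C_1}, \widehat{C_2}$ are negative Latin square type PDSs in $G\times G'$, that $\widehat{L_0}, \widehat{L_1}, \widehat{L_2}$ are Latin square type PDSs in $G\times G$, and that the analogous sets are Latin square type PDSs in $G'\times G'$, are exactly the content of~\cite[Theorems 2.1--2.3]{Polhill_2008}. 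So the abelian base case is already established in the literature.

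Next I would recognize the block structure. By construction, each of the six PDSs is a disjoint union of Cartesian slices of the form $(L_i\cup\{1_G\})\times\{c\}$ or $L_i\times\{c\}$ (with $c$ ranging over $C_j\cup\{1_{G'}\}$ or $C_j$, respectively), minus at most the identity of the product group. Each such slice can be rewritten as $A\cdot(1_G,c)$ with $A\in\calP\colonequals\{1_G,L_0,L_1,L_2\}$ regarded as a subset of $G\times G'$ via the natural embedding $g\mapsto(g,1_{G'})$. Hence each of the six PDSs has the block form hypothesized by Theorem~\ref{Productstononabelian}, with $G$ playing the role of the factor to be replaced. By the left--right symmetry of the constructions, the same PDSs also admit a block decomposition with $G'$ as the factor to be replaced, the blocks now being $B\cdot(g,1_{G'})$ for $B\in\{1_{G'},C_0,C_1,C_2\}$ and $g\in G$.

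With this block structure in hand, I would apply Theorem~\ref{Productstononabelian} once to replace the first factor with any group $\widehat{G}$ admitting a partition of its nonidentity elements into three Latin square type PDSs with the prescribed parameters, obtaining PDSs with the same parameters in $\widehat{G}\times G'$. I would then apply the theorem a second time, now with the roles of the factors interchanged, to replace $G'$ with a group $\widehat{G'}$ whose partition matches that of $G'$. The main obstacle is the bookkeeping involved in verifying that each of the six constructions admits the required block decomposition on \emph{both} factors simultaneously, so that the substitution theorem can be iterated; this is essentially a direct inspection of the formulas. Once this is confirmed, the corollary follows, and the nonabelian examples of order $81$ produced in Sections~\ref{sect:aff} and~\ref{sec:SemidirectLargeCenter} (namely ${G_1}^{+}$, $G_2$, and $\Z_9\rtimes_7\Z_9$ of Latin square type, and ${G_1}^{-}$ of negative Latin square type) can be substituted as $\widehat{G}$ or $\widehat{G'}$ to yield PDSs in a wide collection of previously unknown nonabelian product groups.
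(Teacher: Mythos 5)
Your proposal is correct and follows essentially the same route as the paper, which obtains this corollary by applying Theorem~\ref{Productstononabelian} to the abelian product constructions of \cite[Theorems 2.1--2.3]{Polhill_2008} in the case $p=3$, substituting any groups whose nonidentity elements admit identically parametrized (negative) Latin square type partitions. Your explicit observation that the sets have the required block form with respect to \emph{both} factors, so that the substitution theorem can be applied twice (once per factor), is precisely the bookkeeping the paper leaves implicit.
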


For example, consider the case of $v = 729$. In this case, we can have a partition of the nonidentity elements of the groups ${\Z_3}^2 \times {\Z_3}^4$, ${\Z_3}^2 \times G_{1}^1$, ${\Z_3}^2 \times G_{1}^{-1}$, ${\Z_3}^2 \times G_{2}$, ${\Z_3}^2 \times \Z_{9} \times \Z_{9}$ or ${\Z_3}^2 \times \Z_{9} \rtimes_{7} \Z_{9}$ into three PDSs of cardinalities $260, 234,$ and $234$ respectively or $224, 252,$ and $252$ respectively. Moreover, by \cite[p. 1645]{Polhill_2019}, we also have a partition of $ \Z_{27} \times \Z_{27}$ into three PDSs of cardinalities $260, 234,$ and $234$, respectively.  In fact, direct calculation in \GAP \cite{GAP4} shows that $\Z_{27} \rtimes_{19} \Z_{27}$ also has such a partition into three PDSs of cardinalities $260$, $234$, and $234$, respectively.  (The PDSs in $\Z_{27} \rtimes_{19} \Z_{27}$ are obtained from those in $\Z_{27} \times \Z_{27}$ analogously as when moving from $G_t$ to $\widehat{G_t}$ in Section \ref{sec:SemidirectLargeCenter}.)  

\end{subsection}


\section{Possible next steps}
\label{sect:possnext}

While we have constructed PDSs in several nonabelian groups, we believe there is much more to be uncovered.  We list some open questions.

\begin{itemize}

\item[(1)]  All the constructions in this paper are in $p$-groups.  There have been some constructions of Latin and negative Latin square type PDSs in nonabelian non-$p$-groups, and in particular for $|G|=100$ (see \cite{Jorgensen_Klin_2003} and \cite{Smith_1995}), but aside from these and a few other small examples little is known.  It seems likely that there will be some nonabelian groups with PDSs having the same parameters as those that exist in certain abelian groups, and perhaps (such as with $|G|=100$ there might be some genuinely nonabelian parameters).

\item[(2)] We saw four distinct techniques in this paper that used abelian PDSs to obtain nonabelian PDSs: using quadratic forms and analyzing affine polar graphs, exploiting groups with a large center, calculating group ring equations in place of characters, and identifying that certain product constructions depend only on parameters. Abelian PDSs have been extensively studied, and there are many other techniques to explore from this previous work. One could consider additional ways to carry over the well-developed techniques from abelian groups to the less familiar nonabelian setting.  Especially in light of \cite{Ott_2023}, it seems likely that at least some techniques from character theory would fit this description.

\item[(3)] Theorem \ref{Productstononabelian} from Section \ref{sec:product} could be applied to other results from abelian groups. In particular, there are likely to be many nonabelian PDSs in 2-groups. (For example, consider the results of \cite{Feng_He_Chen_2020} combined with product theorems such as Theorem \ref{Productstononabelian}.).

\item[(4)] In Section \ref{sec:product}, the objective was to see that the technique of certain product constructions can carry over to nonabelian input groups with the appropriate partition. One starts to see that many nonabelian groups will have PDSs that are the same as the abelian case. One could begin to catalog all the groups that support $(v,k,\lambda, \mu)$-PDSs for relatively small $v$.

\item[(5)] Since PDSs produce strongly regular Cayley graphs, one could also begin to catalog which groups have PDSs that correspond to the various nonisomorphic $(v,k,\lambda, \mu)$-strongly regular graphs for small $v$.

\end{itemize}

\end{document}